\theoremstyle{plain}
\newtheorem{theorem}{Theorem}[section]
\newtheorem{proposition}[theorem]{Proposition}
\newtheorem{lemma}[theorem]{Lemma}
\newtheorem{observation}[theorem]{Observation}
\newtheorem{corollary}[theorem]{Corollary}
\theoremstyle{definition}
\theoremstyle{remark}
\renewcommand{\epsilon}{\varepsilon}
\newcommand{\F}{\mathcal{F}}
\newcommand{\M}{\mathcal{M}}
\newcommand{\R}{\mathbb{R}}
\newcommand{\T}{\mathcal{T}}
\newcommand{\e}{\varepsilon}
\newcommand{\sset}[1]{\left\{#1\right\}}
\newcommand{\ceil}[1]{\lceil#1\rceil}
\newcommand{\floor}[1]{\lfloor#1\rfloor}
\begin{document}

\title{Piercing axis-parallel boxes}
\author{Maria Chudnovsky}\thanks{M. Chudnovsky:
Princeton University, Princeton, NJ 08544. E-mail: mchudnov@math.princeton.edu. Supported by NSF grant DMS-1550991 and US Army Research Office Grant W911NF-16-1-0404.}
\author{Sophie Spirkl}\thanks{S. Spirkl:
Princeton University, Princeton, NJ 08544. E-mail: sspirkl@math.princeton.edu.}
\author{Shira Zerbib}\thanks{S. Zerbib: Department of Mathematics,
University of Michigan, Ann Arbor. E-mail: zerbib@umich.edu. Partly supported by the New-England Fund, Technion.}

%%%%%%%%%%%%%%%%%%%%%%%%%%%%%%%%%%%%%%%%%%%%%%%%%%%%%%%%%%%%%%%%%%%%%%%%%%%%%%%%%

\begin{abstract}
Let $\F$ be a finite family of axis-parallel boxes in $\R^d$ such that $\F$ contains no $k+1$ pairwise disjoint boxes.
We prove that if $\F$ contains a subfamily $\M$ of $k$ pairwise disjoint boxes with the property that for every $F\in \F$ and $M\in \M$ with $F \cap M \neq \emptyset$, either 
$F$ contains a corner of $M$ or 
$M$ contains $2^{d-1}$ corners of $F$, then $\F$ can be pierced by $O(k)$ points. One consequence of this result is that if $d=2$ and the ratio between any of the side lengths of any box is bounded by a constant, then $\F$ can be pierced by $O(k)$ points.        
We further show that if  
for each two intersecting boxes in $\F$ a corner of one is contained in the other, then $\F$ can be pierced by at most $O(k\log\log(k))$ points, and in the special case where $\F$ contains only cubes this bound improves to $O(k)$.  
\end{abstract}

\maketitle

\section{Introduction}
A \emph{matching} in a hypergraph $H=(V,E)$ on  vertex set $V$ and edge set $E$ is a subset of disjoint edges in $E$, and a \emph{cover} of $H$ is a subset of $V$ that intersects all edges in $E$. The \emph{matching number} $\nu(H)$ of $H$ is the maximal size of a matching in $H$, and the \emph{covering number} $\tau(H)$ of $H$ is the minimal size of a cover. The fractional relaxations of these numbers are denoted as usual by $\nu^*(H)$ and $\tau^*(H)$. By LP duality we have that $\nu^*(H)=\tau^*(H)$.

Let $\F$ be a finite family of axis-parallel boxes in $\R^d$. We identify $\F$ with the hypergraph consisting with vertex set $\R^d$ and 
 edge set $\F$. Thus a matching in $\F$ is a subfamily of pairwise disjoint boxes (also called an {\em independent set} in the literature) and a cover in $\F$ is a set of points in $\R^d$ intersecting every box in $\F$ (also called a {\em hitting set}).  
 
An old result due to Gallai is the following (see e.g. \cite{hajnalsuranyi}): 
\begin{theorem}[Gallai]\label{thm:gallai} 
If $\F$ is a family of intervals in $\R$ (i.e., a family of boxes in $\R$) then $\tau(\F) = \nu(\F)$.
\end{theorem}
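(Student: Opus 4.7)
The plan is to prove both inequalities separately. The direction $\tau(\F)\ge\nu(\F)$ is immediate from the definitions: the members of any matching $\M\subseteq\F$ are pairwise disjoint, so no single point of $\R$ can lie in two of them, hence every cover must contain at least $|\M|$ distinct points.

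For the nontrivial direction $\tau(\F)\le\nu(\F)$, I would argue by induction on $|\F|$, implemented as a left-to-right greedy sweep. Choose $I_0\in\F$ whose right endpoint $r$ is minimal. The key one-dimensional Helly-type observation I plan to exploit is: \emph{every $J\in\F$ meeting $I_0$ contains the point $r$}. Indeed, the right endpoint of $J$ is at least $r$ by the choice of $I_0$, and because $J\cap I_0\ne\emptyset$ the left endpoint of $J$ is at most $r$ (otherwise $J$ would lie strictly to the right of $r$, which lies in $I_0$). Thus the single point $r$ pierces every member of $\F$ that meets $I_0$, including $I_0$ itself.

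I would then set $\F':=\{J\in\F:r\notin J\}$, which by the observation consists precisely of the intervals disjoint from $I_0$. Any matching of $\F'$ can be extended by $I_0$ to a matching in $\F$, so $\nu(\F')\le\nu(\F)-1$. Applying the inductive hypothesis to $\F'$ produces a cover of $\F'$ of size at most $\nu(\F)-1$; adjoining the single point $r$ yields a cover of $\F$ of size at most $\nu(\F)$, completing the induction.

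The only genuinely nontrivial ingredient is the Helly-type observation about the minimum-right-endpoint interval; everything else is bookkeeping. This property is purely one-dimensional and fails already for axis-parallel rectangles in the plane, which is precisely why the higher-dimensional theorems in the paper require the additional structural hypotheses on $\F$ announced in the abstract.
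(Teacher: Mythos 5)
Your proof is correct. Note that the paper does not actually prove Theorem~\ref{thm:gallai}; it only cites it (``see e.g.~\cite{hajnalsuranyi}''), so there is no in-paper argument to compare against. Your left-to-right greedy sweep --- pick the interval with minimal right endpoint $r$, observe that $r$ pierces every interval meeting it, delete those and recurse --- is the standard proof, and all the steps check out: the key observation that $J\cap I_0\ne\emptyset$ forces $\ell_J\le r\le r_J$ and hence $r\in J$ is exactly right (using that the boxes in this paper are closed), and the matching extension gives $\nu(\F')\le\nu(\F)-1$ as needed. Your closing remark correctly identifies why this one-dimensional ordering argument does not survive to $d\ge 2$, which is the motivation for the structural hypotheses in Theorems~\ref{mainthm1} and~\ref{mainthm2}.
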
 

A \emph{rectangle} is an axis-parallel box in $\R^2$. In 1965, Wegner \cite{wegner} conjectured that in a hypergraph of axis-parallel rectangles in $\R^2$, the ratio $\tau/\nu$ is bounded by 2. G\'{y}arf\'{a}s and Lehel conjectured in \cite{gyarfaslehel} that the same ratio is bounded by a constant. The best known lower bound, $\tau = \lfloor 5 \nu/3 \rfloor$, is attained by a construction due to Fon-Der-Flaass and Kostochka in \cite{fonderflaass}. K{\'a}rolyi \cite{karolyi} proved that in families of axis-parallel boxes in $\R^d$ we have $\tau(\F) \leq \nu(\F) \left(1 + \log\left(\nu(\F)\right)\right)^{d-1}$, where $\log=\log_2$. Here is a short proof of K{\'a}rolyi's bound.

\begin{theorem}[K\'arolyi \cite{karolyi}] \label{karolyi} If $\F$ is a finite family of axis-parallel boxes in $\R^d$, then $\tau(\F) \leq \nu(\F) \left(1 + \log\left(\nu(\F)\right)\right)^{d-1}$. 
\end{theorem}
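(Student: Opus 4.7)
The natural strategy is a double induction---outer on the dimension $d$, inner on the matching number $\nu = \nu(\F)$. The base case $d = 1$ is exactly Theorem~\ref{thm:gallai}, and $\nu \leq 1$ is trivial in every dimension.

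For the inductive step with $d \geq 2$ and $\nu \geq 2$, let $\mathcal{N} = \{B_1, \dots, B_\nu\}$ be a maximum matching and order the $B_i$ by their (pairwise disjoint) $x_d$-projections. Choose $c \in \R$ lying in the gap between the projections of $B_{\lfloor \nu/2 \rfloor}$ and $B_{\lfloor \nu/2 \rfloor + 1}$, set $H = \{x_d = c\}$, and partition $\F$ into $\F^-$ (boxes strictly below $H$), $\F^+$ (boxes strictly above $H$), and $\F^0$ (boxes meeting $H$). The crucial observation is that $\nu(\F^-) \leq \lfloor \nu/2 \rfloor$: any matching in $\F^-$ is disjoint from $B_{\lfloor \nu/2 \rfloor + 1}, \dots, B_\nu$ (which lie on the opposite side of $H$), so together they form a matching in $\F$, forcing the matching in $\F^-$ to have at most $\lfloor \nu/2 \rfloor$ members; the symmetric argument gives $\nu(\F^+) \leq \lceil \nu/2 \rceil$.

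Now apply induction: by inner induction, $\tau(\F^\pm) \leq \lceil \nu/2 \rceil \bigl(1 + \log \lceil \nu/2 \rceil\bigr)^{d-1}$. For $\F^0$, the slices $\{F \cap H : F \in \F^0\}$ form a family of $(d-1)$-dimensional axis-parallel boxes in $H$ of matching number at most $\nu$, and any piercing set for this sliced family also pierces $\F^0$; hence by outer induction $\tau(\F^0) \leq \nu(1 + \log \nu)^{d-2}$. Summing and using the identities $1 + \log(\nu/2) = \log \nu$ and $(\log \nu)^{d-1} + (1 + \log \nu)^{d-2} \leq (1 + \log \nu)^{d-1}$, a short arithmetic check yields $\tau(\F) \leq \nu(1 + \log \nu)^{d-1}$.

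The main technical obstacle is the choice of the splitting hyperplane: one must bound $\nu(\F^\pm)$ itself, not merely count how many matching elements lie on each side of $H$, and this is exactly where the augmentation trick---extending a matching on one side by the matching boxes on the other---is essential. The parity of $\nu$ introduces only mild arithmetic nuisance in the final estimate.
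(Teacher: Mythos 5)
Your overall architecture (double induction, split by an axis-parallel hyperplane into two half-space families plus a slab family, recurse in dimension on the slab) is exactly the paper's. But there is a genuine gap in how you choose the splitting hyperplane. You order the maximum matching $B_1,\dots,B_\nu$ "by their (pairwise disjoint) $x_d$-projections" and pick $c$ in a gap between consecutive projections. The projections of pairwise disjoint boxes onto a fixed coordinate need not be pairwise disjoint: Observation~\ref{obs:disjoint} only guarantees that for each \emph{pair} there is \emph{some} coordinate separating them, and that coordinate varies from pair to pair (already for two disjoint rectangles in $\R^2$ stacked vertically, the $x$-projections coincide). So in general there is no coordinate in which the $B_i$ are linearly ordered, no "gap" to place $c$ in, and the augmentation step --- "any matching in $\F^-$ is disjoint from $B_{\lfloor\nu/2\rfloor+1},\dots,B_\nu$" --- has no basis, since those boxes need not lie above $H$ at all. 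The paper circumvents this by sweeping: it sets $\F_a=\{F\in\F: F\subseteq\{x_1\le a\}\}$ and takes $a^*$ minimal with $\nu(\F_{a^*})\ge\lceil\nu/2\rceil$. Minimality gives $\nu(\F_1)\le\lceil\nu/2\rceil-1$ for the boxes strictly below, and your augmentation idea is then applied correctly on the other side: a matching of boxes strictly above $H_{a^*}$ combines with a $\lceil\nu/2\rceil$-matching inside $L_{a^*}$, forcing $\nu(\F_3)\le\lfloor\nu/2\rfloor$. You correctly identified that one must bound $\nu(\F^\pm)$ rather than count matching elements per side, but your mechanism for doing so rests on the false ordering assumption.

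A secondary point: even granting your split, your bounds $\nu(\F^-)\le\lfloor\nu/2\rfloor$ and $\nu(\F^+)\le\lceil\nu/2\rceil$ are too weak for the arithmetic when $\nu$ is odd. For $\nu=3$, $d=2$ your three terms give $1+4+3=8$, while the target is $3(1+\log 3)\approx 7.75$. The paper's sweep yields the strictly smaller bound $\lceil\nu/2\rceil-1$ on one side, so both half-space terms are at most $\tfrac{\nu}{2}\bigl(1+\log\tfrac{\nu}{2}\bigr)^{d-1}$ and the estimate closes. Both issues are repaired by adopting the sweep definition of the cutting hyperplane.
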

\begin{proof}
We proceed by induction on $d$ and $\nu(\F)$. Note that if $\nu(\F) \in \{0, 1\}$ then the result holds for all $d$. Now let $d, n \in \mathbb{N}$. Let $F_{d'}:\R \to \R$ be a function for which $\tau(\T) \le F_{d'}(\nu(\T))$ for every family $\T$ of axis-parallel boxes in $\R^{d'}$ with $d' < d$, or with $d=d'$ and $\nu(\T) < n$.

Let $\F$ be a family of axis-parallel boxes in $\R^d$ with $\nu(\F) = n$. For $a \in \R$, let $H_a$ be the hyperplane $\sset{x = (x_1, \dots, x_d) : x_1=a}$. Write $L_a = \sset{x = (x_1, \dots, x_d) : x_1 \leq a}$, and let $\F_a = \sset{F \in \F: F \subseteq L_a}$. Define $a^* = \min \sset{a : \nu(F_a) \geq \ceil{\nu/2}}$. The hyperplane $H_{a^*}$ gives rise to a partition $\F=\bigcup_{i=1}^3\F_i$, where $\F_1 = \sset{F \in \F : F \subseteq L_{a^*} \setminus H_{a^*}}$, $\F_2 = \sset{F \in \F: F \cap H_{a^*} \neq \emptyset}$, and $\F_3 = \F \setminus (\F_1 \cup F_2)$. It follows from the choice of $a^*$ that $\nu(\F_1) \leq \ceil{\nu(\F)/2}-1$, $\nu(\F_2) \leq \nu(\F)$, and $\nu(\F_3) \leq \floor{\nu(\F)/2}$. 

Therefore, 
\begin{align*}
F_d(\nu(\F)) &\le \tau(\F_1) + \tau(\F_3) + \tau(\sset{F\cap H_{a^*} : F\in \F_2})\\
&\le F_d(\nu(\F_1)) + F_d(\nu(\F_3)) + F_{d-1}(\nu(\F_2))\\
&\le F_d(\Big\lceil\frac{\nu(\F)}{2}\Big\rceil-1) + F_d(\Big\lfloor\frac{\nu(\F)}{2}\Big\rfloor) + F_{d-1}(\nu(\F)) \\
&\le 2~\frac{\nu(\F)}{2}  \left(1 + \log\left(\frac{\nu(\F)}{2}\right)\right)^{d-1} + \nu(\F)  \left(1 + \log\left(\nu(\F)\right)\right)^{d-2} \\
&\le \nu(\F) \left(1 + \log\left(\nu(\F)\right)\right)^{d-1}, 
\end{align*}
implying the result. 
\end{proof} 
For $\nu(\F) = 1$, this implies the following well-known result (see e.\ g.\ \cite{fonderflaass}). 
\begin{observation} \label{obs:pairwise} Let $\F$ be a family of axis-parallel boxes with $\nu(\F) = 1$. Then $\tau(\F) = 1$. 
\end{observation}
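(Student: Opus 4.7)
The plan is to reduce to the one-dimensional case and apply Gallai's theorem (Theorem \ref{thm:gallai}) coordinate by coordinate, using the product structure of axis-parallel boxes.

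First, write each box $F \in \F$ as a Cartesian product $F = I_1(F) \times \cdots \times I_d(F)$ of closed intervals in $\R$. Observe the standard fact that two axis-parallel boxes $F, F' \subseteq \R^d$ intersect if and only if $I_j(F) \cap I_j(F') \neq \emptyset$ for every $j \in \{1, \dots, d\}$. The hypothesis $\nu(\F) = 1$ means no two boxes in $\F$ are disjoint, so every pair of boxes in $\F$ intersects.

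Next, fix a coordinate $j$ and consider the family of projections $\F_j := \{I_j(F) : F \in \F\}$, a family of intervals in $\R$. By the previous observation, any two members of $\F_j$ intersect, so $\nu(\F_j) = 1$. Theorem \ref{thm:gallai} then yields $\tau(\F_j) = 1$, i.e.\ there is a point $p_j \in \R$ lying in $I_j(F)$ for every $F \in \F$.

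Finally, set $p = (p_1, \dots, p_d) \in \R^d$. For every $F \in \F$ and every coordinate $j$, we have $p_j \in I_j(F)$, hence $p \in F$. Thus $\{p\}$ is a cover of $\F$, giving $\tau(\F) \le 1$; since $\F$ is nonempty (as $\nu(\F) = 1$), equality holds. There is no real obstacle here beyond invoking the one-dimensional Helly-type statement supplied by Gallai; all the work is in reducing to projections and recombining the coordinate-wise witnesses into a single point.
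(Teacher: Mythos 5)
Your proof is correct, but it takes a different route from the paper. The paper obtains Observation~\ref{obs:pairwise} as an immediate corollary of Theorem~\ref{karolyi}: substituting $\nu(\F)=1$ into K\'arolyi's bound $\tau(\F)\le \nu(\F)\left(1+\log(\nu(\F))\right)^{d-1}$ gives $\tau(\F)\le 1$, so no separate argument is written out. You instead give the classical direct proof that axis-parallel boxes have Helly number $2$: decompose each box as a product of intervals, note that pairwise intersection of boxes is equivalent to pairwise intersection in every coordinate, apply the one-dimensional statement (Gallai/Helly for intervals) in each coordinate, and assemble the $d$ coordinate-wise piercing points into a single point. Your argument is more elementary and self-contained -- it does not rely on the inductive hyperplane-splitting machinery behind Theorem~\ref{karolyi} -- and it makes the underlying geometric reason transparent; the paper's derivation is shorter on the page only because K\'arolyi's theorem has already been established. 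One cosmetic remark: you only need the $\nu=1$ case of Theorem~\ref{thm:gallai}, which is exactly the one-dimensional Helly property for (finitely many) closed intervals, so citing Gallai is slightly heavier machinery than required, but entirely valid.
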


Note that for $\nu(\F) = 2$, we have that $\F_1 = \emptyset$, $\nu(\F_2) = 1$ and so $\tau(\F) \leq F_{d-1}(2) + 1$. Therefore, we have the following, which was also proved in~\cite{fonderflaass}. 
\begin{observation}[Fon-der-Flaass and Kostochka \cite{fonderflaass}] \label{obs:nutwo}
Let $\F$ be a family of axis-parallel boxes in $\R^d$ with $\nu(\F) = 2$. Then $\tau(\F) \leq d+1$. 
\end{observation}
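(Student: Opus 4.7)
The plan is to run induction on the dimension $d$, piggybacking on the hyperplane-splitting construction used in the proof of Theorem~\ref{karolyi}. The base case $d=1$ is immediate from Theorem~\ref{thm:gallai}: $\tau(\F)=\nu(\F)=2=1+1$. For the inductive step, I will apply the same construction as in the proof of Theorem~\ref{karolyi} to our family $\F$ with $\nu(\F)=2$: fix the coordinate direction $x_1$, define $a^*=\min\{a:\nu(\F_a)\ge 1\}$, and split $\F=\F_1\cup \F_2\cup\F_3$ using $H_{a^*}$ exactly as there.

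Now I would read off what the general bounds from the Theorem~\ref{karolyi} proof say in this special case. First, $\nu(\F_1)\le\lceil \nu(\F)/2\rceil-1=0$, so $\F_1=\emptyset$ and $\tau(\F_1)=0$. Second, $\nu(\F_3)\le\lfloor\nu(\F)/2\rfloor=1$, so Observation~\ref{obs:pairwise} applied to $\F_3$ yields $\tau(\F_3)\le 1$. Third, the trace $\{F\cap H_{a^*}:F\in\F_2\}$ is a family of axis-parallel boxes in $\R^{d-1}$ whose matching number is at most $\nu(\F_2)\le\nu(\F)=2$, so by the inductive hypothesis in dimension $d-1$ its covering number is at most $(d-1)+1=d$; pulling these cover points back along the $x_1$-direction gives a cover of $\F_2$ of size at most $d$.

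Summing the three contributions gives $\tau(\F)\le \tau(\F_1)+\tau(\F_3)+\tau(\{F\cap H_{a^*}:F\in\F_2\})\le 0+1+d=d+1$, which is the desired bound. There is no real obstacle here: the argument is just a careful accounting exercise on top of the construction already present in the proof of Theorem~\ref{karolyi}, with the only mildly subtle points being the vanishing of $\F_1$ (forced by the minimality in the choice of $a^*$) and the invocation of Observation~\ref{obs:pairwise} to handle $\F_3$ without losing a logarithmic factor.
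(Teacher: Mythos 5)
Your proof is correct and follows essentially the same route as the paper, which obtains the bound by specializing the hyperplane-splitting recursion from the proof of Theorem~\ref{karolyi} to $\nu(\F)=2$ (yielding $\F_1=\emptyset$, $\nu(\F_3)\le 1$, and the recursion $F_d(2)\le F_{d-1}(2)+1$ with base case $F_1(2)=2$ from Gallai). Your write-up just makes the induction on $d$ explicit.
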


The bound from Theorem~\ref{karolyi} was improved by Akopyan~\cite{akopyan} to $\tau(\F) \leq (1.5 \log_3 2 + o(1)) \nu(\F) \left(\log_2\left(\nu(\F)\right)\right)^{d-1}$.

A \emph{corner} of a box $F$ in $\R^d$ is a zero-dimensional face of $F$. 
We say that two boxes in $\R^d$ {\em intersect at a corner} if one of them contains a corner of the other.

A family $\F$ of connected subsets of $\R^2$ is a family of \emph{pseudo-disks}, if for every pair of distinct subsets in $\F$, their boundaries intersect in at most two points. In \cite{chanhar}, Chan and Har-Peled proved that families of pseudo-disks in $\R^2$ satisfy $\tau =O(\nu)$.   It is easy to check that if $\F$ is a family of axis-parallel rectangles in $\R^2$ in which every two intersecting rectangles intersect at a corner, then $\F$ is a family of pseudo-disks. Thus we have:

\begin{theorem}[Chan and Har-Peled \cite{chanhar}]\label{chenhar}
There exists a constant $c$ such that for every family $\F$ of axis-parallel rectangles in $\R^2$ in which every two intersecting rectangles intersect at a corner, we have that $\tau(\F)\le c\nu(\F)$. 
\end{theorem}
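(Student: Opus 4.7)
The plan is to exploit the pseudo-disk structure of $\F$---already noted in the paragraph preceding the theorem---and to combine three classical ingredients from combinatorial geometry: linear union complexity of pseudo-disks, $\epsilon$-nets of linear size, and an LP rounding scheme that converts fractional hitting sets into integral ones.

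First I would verify the pseudo-disk property in detail. For two intersecting rectangles $R_1,R_2\in\F$, the hypothesis places, without loss of generality, a corner of $R_2$ inside $R_1$. A brief case analysis on the number of corners of $R_2$ lying in $R_1$---which for axis-parallel rectangles must be $1$, $2$, or $4$, since having three corners of one axis-parallel rectangle inside another forces the fourth in as well---together with the symmetric count for $R_1$'s corners inside $R_2$, shows that $|\partial R_1\cap\partial R_2|\le 2$ in every admissible configuration. Hence $\F$ is a family of pseudo-disks.

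I would then invoke the theorem of Kedem, Livne, Pach and Sharir that the boundary of the union of any $n$ pseudo-disks has combinatorial complexity $O(n)$. Through the Clarkson--Varadarajan framework (as refined by Aronov, Ezra and Sharir), linear union complexity yields $\epsilon$-nets of size $O(1/\epsilon)$ for the primal range space of points versus pseudo-disks. The Br\"onnimann--Goodrich rounding scheme then converts an optimal fractional hitting set into an integral hitting set of size $O(\tau^*(\F))$; combined with LP duality this gives $\tau(\F)=O(\tau^*(\F))=O(\nu^*(\F))$.

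It remains to bound $\nu^*(\F)=O(\nu(\F))$, and this is the main obstacle. The $\epsilon$-net reductions above are essentially black boxes, but closing the integrality gap between $\nu^*$ and $\nu$ requires additional geometric input beyond the two-point boundary intersection property. The route I would pursue is a charging/coloring argument on the planar arrangement: linear union complexity of any subfamily of pseudo-disks implies a bound on the fractional chromatic number of its intersection graph, which in turn bounds $\nu^*/\nu$ by a constant, possibly via a fractional Helly-type statement. An alternative plan I would keep in reserve, sidestepping the LP altogether, is a direct sweep: repeatedly select the rectangle $R$ with the lowest upper edge in what remains, group the rectangles currently intersecting $R$ by which corner is contained in which (each such group being a family with $\nu=1$, hence pierceable by a single point via Observation~\ref{obs:pairwise}), pierce each group, remove $R$ along with its neighbors, and charge the $O(1)$ piercing points to the matching element $R$ in an inductively built maximum matching.
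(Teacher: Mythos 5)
Your first paragraph is exactly the paper's argument for this theorem: the authors verify (in the sentence preceding the statement) that such a family is a family of pseudo-disks and then simply invoke the cited Chan--Har-Peled result that pseudo-disks satisfy $\tau=O(\nu)$. Your case analysis of the boundary intersections is correct (the only configuration with four boundary crossings is the one where two rectangles overlap but neither contains a corner of the other, which is excluded by hypothesis). Had you stopped there and cited the black box, the proof would be complete and identical to the paper's.

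The rest of your proposal attempts to reprove the black box, and there you leave a genuine gap which you yourself flag: the chain $\tau(\F)=O(\tau^*(\F))=O(\nu^*(\F))$ still requires $\nu^*(\F)=O(\nu(\F))$, and neither of your two proposed routes establishes it. The ``fractional chromatic number / fractional Helly'' route is not worked out, and the reserve sweep plan is unsound as stated: after selecting the rectangle $R$ with the lowest upper edge, the rectangles meeting $R$ need not fall into $O(1)$ groups each with $\nu=1$ --- those $S$ whose horizontal extent lies strictly inside that of $R$ and which stick out above $R$'s top edge contain no corner of $R$, can be pairwise disjoint in unbounded number, and are precisely the configurations that force the paper to introduce pendant boxes, the digraphs $G_i$, and extremal matchings in Section~4. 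The missing bound does, however, have a short proof, and it is the paper's own Proposition in Section~2 specialized to $d=2$: take a uniform rational fractional matching of value $n/r$ on $n$ boxes; Tur\'an's theorem gives at least $n(n-k)/(2k)$ intersecting pairs where $k=\nu(\F)$; each such pair places a corner of one rectangle inside the other; since there are only $4n$ corners in total, some corner pierces at least $n/(4k)$ rectangles, so $r\ge n/(4k)$ and $\tau^*(\F)=\nu^*(\F)=n/r\le 4\nu(\F)$. With that inserted, your union-complexity/$\epsilon$-net/Br\"onnimann--Goodrich step (or, more directly, the Pyrga--Ray linear-size weak $\epsilon$-nets for pseudo-disks) closes the argument.
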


Here we prove a few different generalizations of this theorem. In Theorem~\ref{dloglog} we prove the bound  $\tau(\F) \leq  c\nu(\F)\log\log(\nu(\F))$ for families $\F$ of axis-parallel boxes in $\R^d$ in which every two intersecting boxes intersect at a corner, and in Theorem \ref{dloglogsquares} we prove $\tau(\F) \leq c \nu(\F)$ for families $\F$ of  axis-parallel cubes in $\R^d$, where in both cases $c$ is a constant depending only on the dimension $d$. We further prove in Theorem~\ref{mainthm1} that in families $\F$ of axis-parallel boxes in $\R^d$ satisfying certain assumptions on their pairwise intersections, the bound on the covering number improves to $\tau(\F) \leq c \nu(\F)$. For $d=2$, these assumptions are equivalent to the assumption that there is a maximum matching $\M$ in $\F$ such that every intersection between a box in $\M$ and a box in $\F \setminus \M$ occurs at a corner. We use this result to prove our Theorem~\ref{mainthm2}, asserting that for every $r$, if $\F$ is a family of axis-parallel rectangles in $\R^2$ with the property that the ratio between the side lengths of every rectangle in $\F$ is bounded by $r$, then $\tau(\F) \leq c \nu(\F)$ for some constant $c$ depending only on $r$. 

Let us now describe our results in more detail. First, for general dimension $d$ we have the following. 

\begin{theorem}\label{dloglog}
There exists a constant $c$ depending only on $d$, such that for every family $\F$ of axis-parallel boxes in $\R^d$ in which every two intersecting boxes intersect at a corner we have $\tau(\F) \leq  c\nu(\F)\log\log(\nu(\F))$. 
\end{theorem}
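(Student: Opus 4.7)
The plan is to induct on the dimension $d$. The base case $d = 2$ is immediate from Theorem~\ref{chenhar}, which even gives the stronger bound $\tau(\F) \le c_2 \nu(\F)$.

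For the inductive step I would imitate K\'arolyi's slicing argument used in the proof of Theorem~\ref{karolyi}. Fix a hyperplane $H_{a^*}$ orthogonal to the $x_1$-axis so that both $\nu(\F_1)$ and $\nu(\F_3)$ are at most $\lceil \nu(\F)/2 \rceil$, where $\F_1, \F_3$ are the boxes lying strictly on each side of $H_{a^*}$ and $\F_2$ is the subfamily crossing $H_{a^*}$. The key observation that makes the induction go through is that the corner-intersection property is inherited by the cross-section: if $F, F' \in \F_2$ and $F$ contains a corner $v = (v_1, \ldots, v_d)$ of $F'$, then $a^*$ lies in the $x_1$-projection of $F$ (because $F$ crosses $H_{a^*}$) and $(v_2, \ldots, v_d)$ lies in the projection of $F$ onto the last $d-1$ coordinates, so $F \cap H_{a^*}$ contains the point $(a^*, v_2, \ldots, v_d)$, which is a corner of $F' \cap H_{a^*}$. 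Hence $\{F \cap H_{a^*} : F \in \F_2\}$ is a family of $(d-1)$-dimensional boxes satisfying the corner-intersection hypothesis, to which the inductive hypothesis applies.

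The main obstacle is that this direct recursion $f_d(n) \le 2 f_d(n/2) + f_{d-1}(n)$ produces only $f_d(n) = O(n \log^{d-2} n)$, which is far from the target $O(n \log\log n)$. To close the gap I would aim for a strengthened inductive statement that controls the \emph{fractional} covering number: namely, that $\tau^*(\F) \le c'_d \nu(\F)$ for every corner-intersecting family $\F$ in $\R^d$. The slicing recursion is much friendlier to the fractional parameter, because Chan--Har-Peled plus $\tau^* \le \tau$ already handles $d = 2$, and an LP-duality argument applied to the slice and the two sides allows one to aggregate the bounds on $\F_1, \F_2 \cap H_{a^*}$, and $\F_3$ into a single fractional cover whose weight grows only linearly in $\nu$.

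Finally, to convert the fractional bound back into an integer hitting set I would appeal to an $\epsilon$-net theorem of Aronov--Ezra--Sharir type for axis-parallel boxes, which provides $\epsilon$-nets of size $O_d((1/\epsilon) \log\log(1/\epsilon))$. Applied to the range space dual to $\F$, this yields $\tau(\F) = O_d\bigl(\tau^*(\F) \log\log \tau^*(\F)\bigr) = O_d(\nu(\F) \log\log \nu(\F))$, completing the argument. The hardest step I expect to be the proof of the fractional bound $\tau^* = O_d(\nu)$ under the corner hypothesis; the slicing verification in the second paragraph is what makes this induction feasible, and the $\epsilon$-net step is then fairly standard.
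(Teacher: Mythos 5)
Your overall architecture is the right one and matches the paper's: first establish a fractional bound $\tau^*(\F) = O_d(\nu(\F))$, then convert it to an integral cover via the Aronov--Ezra--Sharir/Ezra $\e$-net theorem (Theorem~\ref{nets}), which is exactly how the paper finishes. The slice-inheritance observation in your second paragraph is also correct as stated. However, the core of the proof --- the fractional bound itself --- is missing, and the route you sketch for it does not work. The slicing decomposition gives $\tau^*(\F) \le \tau^*(\F_1) + \tau^*(\F_3) + \tau^*(\{F \cap H_{a^*}\})$, and even using $\nu(\F_1)+\nu(\F_3) \le \nu(\F)$, the cross-section term contributes $\Theta(\nu(\F))$ at \emph{every} level of the recursion, since $\nu(\F_2)$ can be as large as $\nu(\F)$. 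Passing to the fractional parameter does not change the shape of the recurrence $f(n) \le 2f(n/2) + cn$, which still solves to $\Theta(n\log n)$, not $O(n)$; the unexplained ``LP-duality aggregation'' step is precisely where a new idea is needed, and you do not supply one. (Your final bound would then be $O(\nu\log\nu\log\log\nu)$, worse than the target.)

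The paper's actual proof of the fractional bound is a short, non-inductive double count that uses the corner hypothesis globally: normalize a maximum fractional matching so every box has weight $1/r$, where $r$ is the maximum number of boxes through a point, so $\tau^* = n/r$. By Tur\'an's theorem a family of $n$ boxes with $\nu = k$ has at least $n(n-k)/(2k)$ intersecting pairs; each such pair, by the corner hypothesis, yields at least two incidences between a corner of one box and another box; since there are only $2^d n$ corners in total, some corner pierces at least $n/(2^d k)$ boxes, giving $r \ge n/(2^d k)$ and hence $\tau^* \le 2^d \nu$. This is the missing ingredient; once you have it, your $\e$-net step goes through verbatim. Dimension reduction is not needed anywhere in the argument (indeed the paper's Theorem~\ref{chenhar} is used only as motivation, not in this proof).
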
 

For the proof, we first prove the bound  $\tau^*(\F) \leq  2^d\nu(\F)$ on the fractional covering number of $\F$, and then use Theorem~\ref{nets} below for the bound $\tau(\F) = O(\tau^*(\F)\log\log(\tau^*(\F)))$. 

An axis-parallel box is a \emph{cube} if all its side lengths are equal. Note that if $\F$ consists of axis-parallel cubes in $\R^d$, then every intersection in $\F$ occurs at a corner. Moreover, for axis-parallel cubes we have $\tau(\F) = O(\tau^*(\F))$ by Theorem~\ref{nets}, and thus we conclude the following.

\begin{theorem}\label{dloglogsquares}
If $\F$ be a family of axis-parallel cubes in $\R^d$, then $\tau(\F) \leq  c\nu(\F)$ for some constant $c$ depending only on $d$. 
\end{theorem}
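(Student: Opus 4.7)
The plan is to derive the cube case as a short consequence of two ingredients that the paper has already lined up. First, the paper signals that in the proof of Theorem~\ref{dloglog} it will establish the fractional bound $\tau^*(\F)\le 2^d\nu(\F)$ for any family of axis-parallel boxes in which every two intersecting boxes intersect at a corner. Second, Theorem~\ref{nets} grants $\tau(\F)=O(\tau^*(\F))$ specifically for families of axis-parallel cubes. The only step that needs to be verified here is that a family of axis-parallel cubes automatically satisfies the hypothesis of the first bound.

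To that end, I would check the following elementary geometric observation: every two intersecting axis-parallel cubes in $\R^d$ intersect at a corner. Let $C_1,C_2$ be such cubes and, without loss of generality, let $s_1\le s_2$ be their side lengths. In each coordinate direction $i$, write the projections of $C_1$ and $C_2$ as $[a_i,a_i+s_1]$ and $[b_i,b_i+s_2]$, respectively. Their overlap together with $s_1\le s_2$ forces at least one of the two endpoints $a_i$, $a_i+s_1$ to lie in $[b_i,b_i+s_2]$: a shorter interval cannot overshoot a longer one on both sides. Picking the right endpoint coordinatewise gives a corner of $C_1$ that lies in $C_2$, as required.

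Once the corner condition is in place, I would chain the two inequalities:
\[
\tau(\F)\;\le\;C_d\,\tau^*(\F)\;\le\;C_d\cdot 2^d\,\nu(\F)\;=\;c\,\nu(\F),
\]
where the first inequality is Theorem~\ref{nets} for axis-parallel cubes and the second is the fractional bound applied to $\F$, which qualifies by the observation above. The constant $c$ depends only on $d$, yielding the theorem.

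In this statement taken in isolation, no substantial obstacle remains: the real work is packaged in the two cited results. The bound $\tau^*(\F)\le 2^d\nu(\F)$ should come from an LP-duality/fractional argument exploiting the corner condition (and will appear in the proof of Theorem~\ref{dloglog}), while the step $\tau(\F)=O(\tau^*(\F))$ relies on the existence of $\epsilon$-nets of size $O(1/\epsilon)$ for hypergraphs of axis-parallel cubes, which is the content of Theorem~\ref{nets} and is specific to cubes (boxes with unbounded aspect ratio would only give the weaker $O((1/\e)\log\log(1/\e))$ net bound, which is exactly what forces the extra $\log\log$ factor in Theorem~\ref{dloglog}).
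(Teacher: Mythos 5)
Your proposal is correct and matches the paper's argument exactly: the paper also combines the fractional bound $\tau^*(\F)\le 2^d\nu(\F)$ (proved for families in which every intersection occurs at a corner) with the cube case of Theorem~\ref{nets}, noting that intersecting cubes always intersect at a corner. Your explicit coordinatewise verification of that last fact is a detail the paper leaves to the reader, and it is correct.
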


To get a constant bound on the ratio $\tau/\nu$ in families of axis-parallel boxes in $\R^d$ which are not necessarily cubes, we make a more restrictive assumption on the intersections in $\F$. 

\begin{theorem}\label{mainthm1}
Let $\F$ be a family of axis-parallel boxes in $\R^d$. Suppose that there exists a maximum matching $\M$ in $\F$ such that for every $F\in \F$ and $M\in \M$, at least one of the following holds: 
\begin{enumerate}
\item $F$ contains a corner of $M$; 
\item $F \cap M = \emptyset$;  or
\item $M$ contains $2^{d-1}$ corners of $F$. 
\end{enumerate}
Then $\tau(\F) \leq (2^d+(4+d)d) \nu(\F)$. 
\end{theorem}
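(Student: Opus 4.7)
The plan is to construct a piercing set of size $(2^d + (4+d)d)\nu(\F)$ by associating a bounded number of points to each $M \in \M$; write $k = \nu(\F) = |\M|$. First I put the $2^d$ corners of each $M$ into the piercing set. This handles condition (1): every $F$ containing a corner of some $M \in \M$ is pierced, contributing $2^d k$ points in total.

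Every remaining box must satisfy (3) with every $M \in \M$ it meets (and by maximality of $\M$ it meets at least one). The structural dichotomy I want to exploit is the following: if $M$ contains $2^{d-1}$ corners of $F$, then either those corners are aligned along some coordinate (so $M$ contains a $(d-1)$-face of $F$), or they are unaligned in every coordinate, which forces $M \supseteq F$. In the latter subcase, the family $\{F \in \F : F \subseteq M\}$ has matching number at most $1$ — else two disjoint members together with $\M \setminus \{M\}$ would beat $\M$ — so Observation~\ref{obs:pairwise} gives a single piercing point per $M$, which I absorb into the per-axis budget below.

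The main step is to pierce, for each $M \in \M$ and each axis $i \in [d]$, every remaining $F$ whose $(i,-)$- or $(i,+)$-face lies in $M$ (with $F \not\subseteq M$) using $d+4$ points, contributing $(4+d)d \cdot k$ points altogether. Fix $(M, i)$ and call this subfamily $\G$. For $F \in \G$ with $(i,-)$-face in $M$, the slice $\pi(F) := F \cap \{x_i = b_i(M)\}$ is a $(d-1)$-subbox of the corresponding face $\pi(M)$ of $M$, and contains no corner of $\pi(M)$ because $F$ contains no corner of $M$; the symmetric picture holds on the $\{x_i = a_i(M)\}$ face for $(i,+)$. I claim any $F \in \G$ meeting a second $M' \in \M$ must be sandwiched with $M'$ along axis $i$: if the $2^{d-1}$ corners of $F$ that lie in $M'$ lay on any other face of $F$, some of them would coincide with corners already lying in $M$, contradicting $M \cap M' = \emptyset$. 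An exchange argument then caps at $2$ the number of pairwise disjoint members of $\G$ sandwiched between any fixed pair $(M, M')$, since swapping $\{M, M'\}$ for more than two such boxes would extend $\M$.

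The main obstacle will be turning these structural restrictions into the clean $d+4$ bound per $(M, i)$. The idea is to apply Observation~\ref{obs:nutwo} inside the $(d-1)$-dimensional face of $M$, which yields $\tau \le d$ once the projected matching number is at most $2$, and to add a small constant number of extra points in order to merge both axis-$i$ signs, to handle the at most one unsandwiched $F$ permitted by maximality, and to pierce the single $F \subseteq M$ from the previous step. Summing the resulting $d+4$ points over the $d$ axes and the $k$ members of $\M$, and adding the $2^d k$ corner points from the first step, produces the claimed bound $\tau(\F) \le (2^d + (4+d)d)\nu(\F)$.
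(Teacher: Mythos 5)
Your high-level decomposition (the $2^d$ corners of each $M\in\M$ handle condition (1), then a per-axis analysis of the boxes half-contained in members of $\M$) matches the paper's, but the accounting in your main step is wrong. You allocate a fixed budget of $d+4$ points to each pair $(M,i)$, on the grounds that the relevant subfamily has projected matching number at most $2$. That bound of $2$ holds only for the boxes sandwiched between $M$ and one \emph{fixed} partner $M'$. A single $M$ can be sandwiched along the same axis with many partners $M'_1,\dots,M'_t$, and the corresponding witnesses are then forced to be pairwise \emph{disjoint}, not pairwise intersecting (two intersecting witnesses for $MM'_j$ and $MM'_l$ would make one of them meet three members of $\M$, or force $M'_j=M'_l$). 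Concretely, in $\R^2$ take $M=[0,1]\times[0,11]$, $M'_j=[2,3]\times[j-0.4,\,j+0.4]$ and $F_j=[0.5,2.5]\times[j-0.1,\,j+0.1]$ for $j=1,\dots,10$: then $\M=\sset{M,M'_1,\dots,M'_{10}}$ is a maximum matching, every $F_j$ satisfies condition (3) with both boxes of $\M$ it meets, and the ten $F_j$ are pairwise disjoint, so the family you attach to $(M,1)$ already needs $10>d+4=6$ points. The true cost at $M$ is $d^+(M)+O(d)$, where $d^+(M)$ is the number of partners $M'$.

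What rescues the theorem---and what is missing from your proposal---is a global bound on $\sum_{M}d^+(M)$, i.e.\ on the number of edges of the auxiliary digraph $G_i$ on vertex set $\M$ whose edges $MM'$ are witnessed by sandwiched boxes. The paper proves $|E(G_i)|\le 4\nu(\F)$ by showing that two intersecting witnesses must witness edges sharing a head or a tail (Lemma~\ref{lem:edges}), passing to a sub-digraph with at least a quarter of the edges in which no vertex has both an incoming and an outgoing edge (Lemma~\ref{lem:digraph}), and observing that one witness per surviving edge then forms a matching (Theorem~\ref{thm:edges}). Only after this edge count do the per-$M$ costs $d^+(M)+d$ sum to $(4+d)\nu(\F)$ per axis. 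Without some argument of this kind your per-vertex budget does not sum to $O(\nu(\F))$, so the proposal as written does not establish the bound.
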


For $d=2$, this theorem implies the following corollary.  
\begin{corollary}\label{cor1}
Let $\F$ be a family of axis-parallel rectangles in $\R^2$. Suppose that there exists a maximum matching $\M$ in $\F$ such that for every $F\in \F$ and $M\in \M$, if $F$ and $M$ intersect then they intersect at a corner. 
Then $\tau(\F) \leq 16\nu(\F)$.
\end{corollary}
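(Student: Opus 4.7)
The plan is to derive this corollary as the $d=2$ special case of Theorem~\ref{mainthm1}. For $d=2$ the bound in that theorem evaluates to $2^{2} + (4+2)\cdot 2 = 16$, matching the claim exactly. So it suffices to verify that the matching $\M$ supplied by the hypothesis of Corollary~\ref{cor1} satisfies the trichotomy required by Theorem~\ref{mainthm1}: for every $F \in \F$ and $M \in \M$, either $F \cap M = \emptyset$, or $F$ contains a corner of $M$, or $M$ contains $2^{d-1}=2$ corners of $F$.

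The only case that is not immediate is when $F \cap M \ne \emptyset$ and $F$ contains no corner of $M$; I must show that then $M$ contains at least two corners of $F$. Write $F = [a_1,b_1] \times [a_2,b_2]$ and $M = [c_1,d_1] \times [c_2,d_2]$. A corner of $M$ lies in $F$ precisely when both of its coordinates do, so the assumption ``$F$ contains no corner of $M$'' is equivalent to $\{c_i, d_i\} \cap [a_i, b_i] = \emptyset$ for some $i \in \{1,2\}$; say $i = 1$. Since $F\cap M \ne \emptyset$ the $x$-projections must overlap, which forces $c_1 < a_1 \le b_1 < d_1$, i.e., the $x$-projection of $M$ strictly contains that of $F$. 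By the hypothesis of Corollary~\ref{cor1}, $F$ and $M$ intersect at a corner, so since $F$ contains no corner of $M$, $M$ must contain a corner of $F$. Therefore at least one of $a_2, b_2$ lies in $[c_2, d_2]$; without loss of generality $a_2 \in [c_2, d_2]$. Then both corners $(a_1, a_2)$ and $(b_1, a_2)$ of $F$ have $x$-coordinate in $[c_1, d_1]$ and $y$-coordinate $a_2 \in [c_2, d_2]$, hence both lie in $M$, giving condition~(3).

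The only nontrivial content, and thus the main point of the argument, is the projection step above: it upgrades the symmetric ``intersection at a corner'' hypothesis of Corollary~\ref{cor1} to the asymmetric ``$F$ contains a corner of $M$ or $M$ contains $2^{d-1}$ corners of $F$'' hypothesis needed by Theorem~\ref{mainthm1}. Once this has been checked, invoking Theorem~\ref{mainthm1} with $d=2$ completes the proof.
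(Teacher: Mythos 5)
Your proposal is correct and takes the same route as the paper: Corollary~\ref{cor1} is obtained as the $d=2$ case of Theorem~\ref{mainthm1}, with $2^2 + (4+2)\cdot 2 = 16$. The paper leaves the verification implicit, and you supply it correctly: the key observation that $F$ containing no corner of $M$ forces one projection of $F$ to lie strictly inside the corresponding projection of $M$, so that any corner of $F$ contained in $M$ automatically brings a second corner along with it.
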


Note that Corollary \ref{cor1} is slightly stronger than Theorem \ref{chenhar}. Here we only need that the intersections with rectangles in some fixed maximum matching $\M$ occur at corners, but we do not restrict the intersections of two rectangles $F,F' \notin \M$.  

Given a constant $r>0$, we say that a family $\F$ of axis-parallel boxes in $\R^d$ has an \emph{$r$-bounded aspect ratio} if every box $F\in \F$ has $l_i(F)/l_j(F) \le r$ for all $i, j \in \sset{1,\dots, d}$, where $l_i(F)$ is the length of the orthogonal projection of $F$ onto the $i$th coordinate. 
 
For families of rectangles with bounded aspect ratio we prove the following. 

\begin{theorem}\label{mainthm2}
Let $\F$ be a family of axis-parallel rectangles in $\R^2$ that has an $r$-bounded aspect ratio. Then $\tau(\F) \leq (14+2r^2)\nu(\F)$.
\end{theorem}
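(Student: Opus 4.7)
The plan is to reduce to Theorem~\ref{mainthm1} by deleting from $\F$ the rectangles whose interaction with a fixed maximum matching violates the hypothesis of that theorem, and then to re-cover the deleted rectangles using Gallai's Theorem~\ref{thm:gallai} together with the bounded aspect-ratio hypothesis.

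First, fix any maximum matching $\M$ of $\F$. For $F \in \F$ and $M \in \M$ with $F \cap M \neq \emptyset$, a short case analysis of the possible patterns by which the corners of two intersecting axis-parallel rectangles can lie in each other shows that at least one of the following holds: (a) $F$ contains a corner of $M$; (b) $M$ contains two corners of $F$; or (c) $F$ and $M$ \emph{cross}, i.e.\ on one coordinate axis the projection of one of $F$, $M$ is contained in that of the other while on the other axis the reverse containment holds. (The $1$-corner cases force a corner of the other rectangle to lie in the first one, hence reduce to (a).) Let $C \subseteq \F$ be the rectangles that cross some element of $\M$, and put $\F' := \F \setminus C$. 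Since $\M \subseteq \F'$ and $\nu(\F') \le \nu(\F) = |\M|$, $\M$ is still a maximum matching of $\F'$, and every relevant pair $(F, M) \in \F' \times \M$ satisfies (a) or (b), which are exactly conditions (1) and (3) of Theorem~\ref{mainthm1}. Applying that theorem with $d = 2$ gives $\tau(\F') \le 16\,\nu(\F)$.

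Next I would cover $C$. Fix $M \in \M$ and consider an $F$ crossing $M$ \emph{vertically}, that is, with the second-coordinate projection of $F$ containing that of $M$ and the first-coordinate projection of $M$ containing that of $F$. Then $l_2(F) \ge l_2(M)$, and the $r$-bounded aspect ratio gives
\[
l_1(F) \;\ge\; \frac{l_2(F)}{r} \;\ge\; \frac{l_2(M)}{r} \;\ge\; \frac{l_1(M)}{r^2}.
\]
Two vertical crosses of $M$ have overlapping second-coordinate projections (both contain the second-coordinate projection of $M$), so they are disjoint iff their first-coordinate projections are disjoint; hence at most $r^2$ of them are pairwise disjoint. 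Moreover each vertical cross of $M$ contains the restriction of the horizontal midline of $M$ to its own first-coordinate projection, so Gallai's Theorem~\ref{thm:gallai} applied to these intervals on that midline produces at most $r^2$ points piercing every vertical cross of $M$. A symmetric argument on the vertical midline pierces the horizontal crosses of $M$ with $r^2$ further points. Summing over $\M$ yields $\tau(C) \le 2r^2\,\nu(\F)$ and therefore
\[
\tau(\F) \;\le\; \tau(\F') + \tau(C) \;\le\; (16 + 2r^2)\,\nu(\F).
\]

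The main obstacle is shaving the additive constant from $16$ down to the stated $14$. I expect this requires unfolding the proof of Theorem~\ref{mainthm1} for $d = 2$ rather than invoking it as a black box, so that a couple of the piercing points placed inside each $M \in \M$ (for instance the center of $M$, which lies on both midlines) can be reused to cover cross rectangles simultaneously with non-cross ones, recovering the two units of additive constant lost by passing through Theorem~\ref{mainthm1} directly.
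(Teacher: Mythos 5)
Your decomposition is exactly the paper's: split $\F$ into the rectangles whose intersections with a fixed maximum matching $\M$ occur ``at corners'' (in the sense of conditions (1)/(3) of Theorem~\ref{mainthm1}) and the rectangles that cross some $M\in\M$, and handle the crossing ones per $M$ by projecting onto one coordinate and invoking Gallai's Theorem~\ref{thm:gallai} together with the lower bound $l_1(F)\ge l_1(M)/r^2$ forced by the aspect ratio. That second half of your argument, including the trichotomy and the bound of $2r^2\nu(\F)$ points for the crossing rectangles, is correct and matches the paper.

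The one genuine gap is the constant: you prove $\tau(\F)\le(16+2r^2)\nu(\F)$ rather than $(14+2r^2)\nu(\F)$, and you correctly flag this, but the fix you speculate about (re-deriving Theorem~\ref{mainthm1} and reusing piercing points) is not what is needed. The missing ingredient is the notion of an \emph{extremal} maximum matching and the second clause of Theorem~\ref{maintechnical}: if $\M$ is chosen extremal (e.g.\ minimizing $\sum_{M\in\M}\sum_i \max(p_i(M))$), then the set $\F'$ of boxes meeting every $M\in\M$ only at corners satisfies $\tau(\F')\le\bigl(2^d+(3+d)d\bigr)\nu(\F)=14\,\nu(\F)$ for $d=2$, because in the proof of Theorem~\ref{thm:cover} extremality lets the $1$-pendant boxes at $M$ be absorbed into the hyperplane argument for the largest $\mathcal{B}(M')$, saving one point per $M$ and per coordinate. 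So instead of deleting the crossing rectangles and applying Theorem~\ref{mainthm1} to what remains (which, as stated, only gives $2^d+(4+d)d=16$ for an arbitrary matching), take $\M$ extremal from the start, apply Theorem~\ref{maintechnical} directly to $\F$ to cover the non-crossing rectangles with $14\,\nu(\F)$ points, and then run your Gallai argument unchanged on the crossing ones.
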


A result similar to Theorem \ref{mainthm2} was announced in \cite{ahlkar}, but to the best of our knowledge the proof was not published. 

An application of Theorem \ref{mainthm2} is 
the existence of weak $\e$-nets of size $O \left(\frac{1}{\e} \right)$ for axis-parallel rectangles in $\R^2$ with bounded aspect ratio. More precisely, let $P$ be a set of $n$ points in $\R^d$ and let $\F$ be a family of sets in $\R^d$, each containing at least $\e n$ points of $P$.  
A \emph{weak $\e$-net} for $\F$ is a cover of $\F$, and a \emph{strong $\e$-net} for $\F$ is a cover of $\F$ with points of $P$. The existence of weak $\e$-nets of size $O \left(\frac{1}{\e} \right)$ for pseudo-disks in $\R^2$ was proved by Pyrga and Ray in \cite{pygra}. Aronov, Ezra and Sharir in \cite{aronovezrasharir} showed the existence of strong $\e$-nets of size $O \left( \frac{1}{\e} \log \log \frac{1}{\e} \right)$ for axis-parallel boxes in $\R^2$ and $\R^3$, and the existence of weak $\e$-nets of size $O \left( \frac{1}{\e} \log \log \frac{1}{\e} \right)$ for all $d$ was then proved by Ezra in \cite{ezra}. Ezra also showed that for axis-parallel cubes in $\R^d$ there exist an $\e$-net of size $O \left(\frac{1}{\e} \right)$. These results imply the following.

\begin{theorem}[Aronov, Ezra and Sharir \cite{aronovezrasharir}; Ezra \cite{ezra}]\label{nets}
If $\F$ is a family of axis-parallel boxes in $\R^d$ then $\tau(\F) \leq  c\tau^*(\F)\log\log(\tau^*(\F))$ for some constant $c$ depending only on $d$. If $\F$ consists of cubes, then this bound improves to $\tau(\F) \leq  c\tau^*(\F)$. 
\end{theorem}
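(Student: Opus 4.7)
The plan is to deduce Theorem~\ref{nets} from the weak $\e$-net theorems for axis-parallel boxes and cubes described informally above, via the standard LP-duality bridge between fractional covering and $\e$-net size. First I would fix a family $\F$ and an optimal fractional cover $w : \R^d \to [0,1]$ of total mass $\tau^*(\F) = \nu^*(\F)$; since $\F$ is finite, standard LP arguments (e.g.\ collapsing together points that lie in exactly the same subfamily of $\F$, of which there are at most $2^{|\F|}$) let me assume that $w$ is supported on a finite set and takes rational values.

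Next I would normalize $w$ to a probability measure $\mu := w/\tau^*(\F)$ on $\R^d$. Because $w$ is a fractional cover, every box $F \in \F$ satisfies $w(F) \geq 1$, and hence $\mu(F) \geq 1/\tau^*(\F)$. Setting $\e := 1/\tau^*(\F)$, the family $\F$ therefore consists entirely of $\mu$-heavy boxes. Applying the weak $\e$-net theorem of Ezra~\cite{ezra} for axis-parallel boxes in $\R^d$ then yields a set $N \subseteq \R^d$ of size $O\bigl(\tfrac{1}{\e} \log\log \tfrac{1}{\e}\bigr) = O\bigl(\tau^*(\F) \log\log \tau^*(\F)\bigr)$ meeting every $F \in \F$. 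Since $N$ is by construction a cover of $\F$, this gives the desired bound $\tau(\F) \leq c\,\tau^*(\F) \log\log \tau^*(\F)$. For the cube case I would instead invoke Ezra's sharper $O(1/\e)$ weak-net bound for axis-parallel cubes, producing a cover of size $O(\tau^*(\F))$ with constant depending only on $d$.

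The main technical point, and the place where care is needed, is the bridge between the measure formulation above and the point-set formulation in which the $\e$-net results are typically stated, namely that for every finite multiset $P \subseteq \R^d$ there is a set of size $O\bigl(\tfrac{1}{\e}\log\log\tfrac{1}{\e}\bigr)$ meeting every axis-parallel box containing at least $\e|P|$ points of $P$. To pass between these two formulations I would exploit the finite, rational support of $w$: clearing denominators realises $\mu$ as the uniform distribution on a multiset $P$ of some integer size $n$, so that the inequality $\mu(F) \geq \e$ becomes $|F \cap P| \geq \e n$, at which point the point-set version of the weak-net theorem applies verbatim to produce $N$. All remaining steps are routine, and the constant $c$ depends only on $d$ through the constants in the underlying $\e$-net theorems of Aronov--Ezra--Sharir~\cite{aronovezrasharir} and Ezra~\cite{ezra}.
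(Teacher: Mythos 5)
Your proposal is correct and is exactly the argument the paper intends: Theorem~\ref{nets} is stated as a consequence of the cited weak $\e$-net results of Aronov--Ezra--Sharir and Ezra, via the standard LP-duality bridge (normalize an optimal finite, rational fractional cover to a probability measure, note every box is $\e$-heavy for $\e = 1/\tau^*(\F)$, clear denominators to realize the measure as a uniform multiset, and apply the point-set $\e$-net theorem). The paper gives no further details, and your write-up, including the care taken over the measure-versus-multiset formulation, fills them in correctly.
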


An example where the smallest strong $\e$-net for axis-parallel rectangles in $R^2$ is of size $\Omega \left(\frac{1}{\e} \log \log \frac{1}{\e} \right)$ was constructed by Pach and Tardos in \cite{pachtardos}. The question of whether weak $\e$-nets of size $O( \frac{1}{\e})$ for axis-parallel rectangles in $\R^2$ exist was raised both in \cite{aronovezrasharir} and in  \cite{pachtardos}. 

Theorem \ref{mainthm2} implies a positive answer for the family of axis-parallel rectangles in $\R^2$ satisfying the $r$-bounded aspect ratio property: 

\begin{corollary}\label{cor:aspect}
For every fixed constant $r$, there exists a weak $\e$-net of size $O(\frac{1}{\e})$ for the family $\F$ of axis-parallel rectangles in $\R^2$ with aspect ratio bounded by $r$. 
\end{corollary}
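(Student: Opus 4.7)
The plan is to derive Corollary~\ref{cor:aspect} directly from Theorem~\ref{mainthm2} via a standard pigeonhole estimate on the matching number. Let $P$ be a set of $n$ points in $\R^2$, and let $\F$ denote the family of axis-parallel rectangles in $\R^2$ with aspect ratio at most $r$ that each contain at least $\e n$ points of $P$. By definition, a weak $\e$-net for $\F$ is precisely a hitting set for $\F$, so the goal is to show $\tau(\F) = O(1/\e)$.

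The first step is to bound the matching number of $\F$. If $\M \subseteq \F$ is a subfamily of pairwise disjoint rectangles, then $\{M \cap P : M \in \M\}$ is a collection of pairwise disjoint subsets of $P$, each of cardinality at least $\e n$. Summing cardinalities gives $|\M| \cdot \e n \leq |P| = n$, so $\nu(\F) \leq 1/\e$.

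The second step is to invoke Theorem~\ref{mainthm2}. Since every rectangle in $\F$ has aspect ratio at most $r$, the hypothesis of Theorem~\ref{mainthm2} is satisfied, and we obtain
\[
\tau(\F) \;\leq\; (14+2r^2)\,\nu(\F) \;\leq\; (14+2r^2)/\e,
\]
which is $O(1/\e)$ with the implied constant depending only on $r$. A minimum cover of $\F$ is then a weak $\e$-net of the claimed size.

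There is no genuine obstacle; the corollary is immediate once Theorem~\ref{mainthm2} is in hand. The only detail worth mentioning is that $\F$ as defined is a priori infinite, whereas Theorem~\ref{mainthm2} is phrased for finite families. This is handled by a routine discretization: there are only finitely many combinatorial types of rectangle determined by the intersection pattern with $P$, so one may pass to a finite representative subfamily before applying the theorem, or equivalently invoke a compactness argument to deduce the infinite case from the finite version. Neither workaround requires any new ideas.
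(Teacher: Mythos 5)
Your proposal is correct and is essentially identical to the paper's own proof: bound $\nu(\F)\le 1/\e$ by the disjointness/pigeonhole argument and then apply Theorem~\ref{mainthm2}. The extra remark about reducing the a priori infinite family to a finite one is a reasonable point of care that the paper leaves implicit.
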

\begin{proof}
Given a set $P$ of $n$ points, there cannot be $\frac{1}{\e} + 1$ pairwise disjoint rectangles in $\F$, each containing at least $\e n$ points of $P$. Therefore $\nu(\F) \le \frac{1}{\e}$. Theorem~\ref{mainthm2} implies that there is a cover of $\F$ of size $O(\frac{1}{\e})$. 
\end{proof}

This paper is organized as follows. In Section 2 we prove Theorem \ref{dloglog}. Section 3 contains definitions and tools. Theorem \ref{mainthm1} is then proved in Section 4 and Theorem \ref{mainthm2} is proved in Section 5.

\section{Proofs of Theorems~\ref{dloglog} and \ref{dloglogsquares}}

Let $\F$ be a finite family of axis-parallel boxes in $\R^d$, such that every intersection in $\F$ occurs at a corner. By performing small perturbations on the boxes, we may assume that no two corners of boxes of $\F$ coincide.

\begin{proposition}
We have $\tau^*(\F) \le 2^d\nu(\F)$.  
\end{proposition}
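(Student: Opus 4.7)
The plan is to use LP duality: since $\tau^*(\F) = \nu^*(\F)$, it suffices to bound the maximum fractional matching by $\nu^*(\F)\le 2^d\nu(\F)$. Fix a maximum integer matching $\M$ with $|\M|=\nu(\F)$, and consider the $2^d\nu(\F)$ corners of the boxes in $\M$. The computation at the heart of the argument is obtained by summing the fractional-matching constraints at these corners: for every fractional matching $w$ and every corner $c$ of some $M\in\M$ we have $\sum_{G\ni c} w(G)\le 1$, so summing over all $2^d\nu(\F)$ such corners and swapping the order of summation yields
\[
\sum_{G\in\F} w(G)\cdot k_\M(G)\;\le\;2^d\nu(\F),
\]
where $k_\M(G)$ counts the $\M$-corners contained in $G$.

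To conclude $\sum_G w(G)\le 2^d\nu(\F)$, it then suffices to arrange that $k_\M(G)\ge 1$ for every $G\in\F$, i.e.\ that every box of $\F$ contains at least one corner of some matching box. For $G\in\M$ this is automatic ($k_\M(G)\ge 2^d$). For $G\notin\M$, maximality of $\M$ forces $G\cap M\ne\emptyset$ for some $M\in\M$, and the corner-intersection hypothesis gives two alternatives: either $G$ contains a corner of $M$ (``Case A,'' in which case $k_\M(G)\ge 1$ is already settled), or $M$ contains a corner of $G$ (``Case B,'' in which case $k_\M(G)$ may a priori equal $0$).

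The main obstacle is therefore Case B. The plan is to remove it by a swap argument on $\M$, exploiting the corner-intersection hypothesis together with the perturbation that ensures all corners are distinct: whenever $G\notin\M$ is in Case B for some $M\in\M$ and $G$ is disjoint from all other matching boxes, replace $M$ by $G$ in $\M$ without decreasing the matching size. When several Case B boxes are nested inside the same $M$, Helly's theorem for axis-parallel boxes (whose Helly number is $2$) guarantees that they share a common point and only one swap is needed. The hard part will be handling Case B boxes that extend out of $M$ and intersect additional matching boxes $M'\in\M$; here one must verify that iterated swaps (or an augmenting-type variant) still terminate in a maximum matching with $k_\M(G)\ge 1$ for every $G\in\F$, which will rely on the fact that a Case B box has at least $2$ of its own corners inside $M$, restricting how it can overlap several matching boxes simultaneously.
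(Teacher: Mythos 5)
Your opening reduction is fine as far as it goes: summing the fractional-matching constraints over the $2^d\nu(\F)$ corners of a maximum matching $\M$ does give $\sum_G w(G)\,k_\M(G)\le 2^d\nu(\F)$, so everything hinges on producing a maximum matching with $k_\M(G)\ge 1$ for every $G\in\F$. But that claim is exactly where the proof stops being a proof. Note first that the claim is equivalent to saying the corners of $\M$ form a (genuine, integral) cover of $\F$, which would yield $\tau(\F)\le 2^d\nu(\F)$ outright -- a much stronger statement than the proposition, and one the paper conspicuously does not prove (it only gets $\tau=O(\nu\log\log\nu)$ after invoking $\e$-net machinery). Your swap argument does not establish it: (i) replacing $M$ by a Case~B box $G$ can destroy the property for other boxes $H$ that contained a corner of $M$ but contain no corner of $G$; (ii) the Helly step proves the wrong thing -- several Case~B boxes nested in $M$ sharing a common point does not make each of them contain a corner of whichever one you promote into $\M$ (a box strictly inside $G$ contains no corner of $G$); and (iii) you explicitly defer the case of a Case~B box meeting two matching boxes, where no single swap is available since $G$ intersects both. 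As written, the argument terminates in an admission that the hard case is unresolved, so there is a genuine gap.

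The paper's proof avoids this structural claim entirely and is worth contrasting with your plan. It normalizes a maximum fractional matching so that every box has weight $1/r$, where $r$ is the maximum number of boxes sharing a point, so that $\tau^*=\nu^*=n/r$; it then applies Tur\'an's theorem to get at least $n(n-k)/(2k)$ intersecting pairs, uses the corner-intersection hypothesis to convert each pair into two incidences between a corner of one box and another box, and pigeonholes over all $2^dn$ corners of all boxes (not just matching corners) to find a single corner piercing at least $n/(2^dk)$ boxes. This lower-bounds $r$ and hence upper-bounds $n/r$ by $2^dk$. The argument is inherently fractional and global; it never needs any box to contain a corner of a matching box. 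If you want to salvage your approach, you would need to either prove the cover-by-matching-corners claim (which would be a stronger theorem and is likely false or at least open in general dimension) or switch to a counting argument of the paper's type.
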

\begin{proof}
Write $\nu(\F) = k$, and let $g:\F \to \mathbb{R}^+$ be a rational approximation of a maximal fractional matching for $\F$. By removing boxes $F\in \F$ for which $g(F)=0$
and duplicating boxes if necessary,
we may assume that $g(F)=\frac{1}{r}$ for all $F
\in \F$, where $r$ is the maximal size of a subset of boxes in $\F$ intersecting in a single point. Letting $n$ be the number of boxes in $\F$ we have $\tau^*(\F)=\nu^*(\F)=\frac{n}{r}$, and thus our aim is to show that
$\frac{n}{r} \le 2^dk$.

Since $\nu(\F)=k$, it follows from Tur\'{a}n's theorem that there are at least $n(n - k)/(2k)$ unordered intersecting pairs of boxes $\F$. Each such unordered pair contributes at least two pairs of the form $(x,F)$, where $x$ is
a corner of a box $F'\in \F$, $F$ is box in $\F$ different from $F'$, and $x$ pierces $F$. Therefore, since there are altogether $2^dn$ corners of boxes in $\F$, there must exist a corner $x$   
of a box $F\in \F$ that pierces at least $(n - k)/2^dk$ boxes in $\F$, all different from $F$. Together with $F$, $x$ pierces at least $n/2^dk$ boxes, implying that $n/2^dk \le r$. Thus $\frac{n}{r} \le 2^dk$, as desired.  
\end{proof}

Combining this bound with Theorem \ref{nets}, we obtain the proofs of Theorems \ref{dloglog} and \ref{dloglogsquares}.

\section{Definitions and tools}
Let $R$ be an axis-parallel box in $\R^d$ with $R = [x_1, y_1] \times \dots \times [x_d, y_d]$. For $i \in \sset{1, \dots, d}$, let $p_i(R) = [x_i, y_i]$ denote the  orthogonal projection of $R$ onto the $i$-th coordinate. Two intervals $[a,b], [c,d] \subseteq \R$, are \emph{incomparable} if $[a,b] \not\subseteq [c,d]$ and $[c,d] \not\subseteq [a,b]$. We say that $[a, b] \prec [c, d]$ if $b < c$. For two axis-parallel boxes $Q$ and $R$ we say that $Q \prec_i R$ if $p_i(Q) \prec p_i(R)$. 

\begin{observation} \label{obs:disjoint} Let $Q, R$ be disjoint axis-parallel boxes in $\R^d$. Then there exists $i \in \sset{1, \dots, d}$ such that $Q \prec_i R$ or $R \prec_i Q$. 
\end{observation}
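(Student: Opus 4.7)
The plan is to prove this by contrapositive: I will show that if no coordinate axis separates $Q$ from $R$ in the sense of $\prec_i$, then $Q \cap R \ne \emptyset$. Write $Q = [a_1,b_1] \times \cdots \times [a_d,b_d]$ and $R = [c_1,d_1] \times \cdots \times [c_d,d_d]$, so that $p_i(Q) = [a_i,b_i]$ and $p_i(R) = [c_i,d_i]$ for each $i$.

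First I would unwind the definition of $\prec_i$: the relation $Q \prec_i R$ means $b_i < c_i$, and $R \prec_i Q$ means $d_i < a_i$. Assuming neither holds for any $i \in \sset{1, \dots, d}$ gives $b_i \ge c_i$ and $d_i \ge a_i$ for every coordinate $i$. Combined with $a_i \le b_i$ and $c_i \le d_i$, this forces $\max(a_i, c_i) \le \min(b_i, d_i)$, so the one-dimensional closed intervals $p_i(Q)$ and $p_i(R)$ intersect; pick any $x_i \in p_i(Q) \cap p_i(R)$.

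Then I would simply assemble the point $x := (x_1, \dots, x_d) \in \R^d$. Since $x_i \in [a_i,b_i]$ for all $i$, we have $x \in Q$, and similarly $x \in R$; hence $x \in Q \cap R$, contradicting disjointness. There is essentially no obstacle here --- the whole argument is an unwinding of definitions, relying on the fact that an axis-parallel box is the Cartesian product of its coordinate projections, so intersection of boxes is equivalent to componentwise intersection of intervals. The only mild subtlety is the strict inequality in the definition of $\prec$, which is exactly what is needed so that ``neither $\prec_i$ holds'' translates into the non-strict inequalities that guarantee the closed projections overlap.
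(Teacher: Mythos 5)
Your proof is correct and complete; the paper states this observation without proof, and your contrapositive argument (no separation in any coordinate implies the coordinate projections pairwise intersect, hence the boxes share a point) is exactly the standard reasoning being taken for granted.
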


\begin{lemma} \label{lem:contains} Let $Q, R$ be axis-parallel boxes in $\R^d$ such that $Q$ contains a corner of $R$ but $R$ does not contain a corner of $Q$. Then, for all $i \in \sset{1, \dots, d}$, either $p_i(R)$ and $p_i(Q)$ are incomparable, or $p_i(R) \subseteq p_i(Q)$, and there exists  $i \in \sset{1, \dots, d}$ such that $p_i(R) \subsetneq p_i(Q)$. 

Moreover, if $R \not\subseteq Q$, then there exists  $j \in \sset{1, \dots, d} \setminus \sset{i}$ such that $p_i(R)$ and $p_i(Q)$ are incomparable. 
\end{lemma}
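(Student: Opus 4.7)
The plan is a coordinate-by-coordinate argument. Write $Q = [a_1, b_1] \times \cdots \times [a_d, b_d]$ and $R = [x_1, y_1] \times \cdots \times [x_d, y_d]$. The hypothesis that $Q$ contains a corner of $R$ amounts to saying that there is a choice $z_i \in \{x_i, y_i\}$ for each $i$ with $z_i \in [a_i, b_i]$; in particular, for every fixed $i$, at least one of $x_i, y_i$ already lies in $[a_i, b_i]$. This single observation rules out both $p_i(R) \cap p_i(Q) = \emptyset$ and $p_i(R) \supsetneq p_i(Q)$, because the latter would force $x_i < a_i$ and $y_i > b_i$ so that neither endpoint of $p_i(R)$ would sit in $p_i(Q)$. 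Thus for every $i$ either $p_i(R) \subseteq p_i(Q)$ or the two intervals are incomparable, which gives the first assertion.

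To produce some $i$ with $p_i(R) \subsetneq p_i(Q)$, I would argue by contradiction, assuming that at every coordinate we have either $p_i(R) = p_i(Q)$ or the projections are incomparable. In the incomparable case the two intervals overhang each other on opposite sides, so exactly one of $a_i, b_i$ sits inside $[x_i, y_i]$; call this endpoint $e_i$. In the equality case, both $a_i, b_i \in [x_i, y_i]$, so take $e_i$ to be either one. The point $(e_1, \dots, e_d)$ is then a corner of $Q$ lying in $R$, contradicting the hypothesis that $R$ contains no corner of $Q$.

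For the \emph{moreover} clause, suppose $R \not\subseteq Q$. Then some coordinate $j$ satisfies $p_j(R) \not\subseteq p_j(Q)$, and by the first assertion $p_j(R)$ and $p_j(Q)$ must be incomparable. Such a $j$ is automatically distinct from the index $i$ produced above, since $p_i(R) \subseteq p_i(Q)$ precludes incomparability at coordinate $i$. The only real idea in this argument is the simultaneous construction, in the second paragraph, of a corner of $Q$ sitting in $R$ by picking at each coordinate the specific endpoint of $p_i(Q)$ that incomparability forces into $p_i(R)$; the remainder is routine bookkeeping.
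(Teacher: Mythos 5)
Your approach is the paper's approach: classify each coordinate projection as ``contained'' or ``incomparable'' using a corner of $R$ lying in $Q$, then, assuming no coordinate gives strict containment, assemble a corner of $Q$ lying in $R$ by choosing in each coordinate the endpoint of $p_i(Q)$ that the classification forces into $p_i(R)$, contradicting the second hypothesis; the \emph{moreover} clause is handled the same way in both.

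One caveat, which the paper's own proof shares: in ruling out $p_i(R) \supsetneq p_i(Q)$ you claim this would force $x_i < a_i$ and $y_i > b_i$, but that fails if $p_i(R)$ and $p_i(Q)$ share an endpoint. For instance $Q = [0,1]^2$ and $R = [0,2]\times[0.2,0.8]$ satisfy both hypotheses of the lemma, yet $p_1(R) = [0,2] \supsetneq [0,1] = p_1(Q)$, so neither ``incomparable'' nor ``$p_1(R)\subseteq p_1(Q)$'' holds at $i=1$. The paper's proof has the identical boundary-case gap at the phrase ``otherwise, $p_i(Q)$ and $p_i(R)$ are incomparable,'' which is false when $\max(p_i(Q)) = \max(p_i(R))$. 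Both arguments are correct under a general-position hypothesis that no two boxes share a projection endpoint in any coordinate, which is the kind of nondegeneracy the paper tacitly relies on (it perturbs to avoid coinciding corners in Section 2). So this is not a weakness relative to the paper; your write-up is, if anything, slightly more careful in that it explicitly separates the case $p_i(R) = p_i(Q)$ when building the corner $(e_1,\dots,e_d)$.
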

\begin{proof}
Let $x = (x_1, \dots, x_d)$ be a corner of $R$ contained in $Q$. By symmetry, we may assume that $x_i = \max(p_i(R))$ for all $i \in \sset{1, \dots, d}$. Since $x_i \in p_i(Q)$ for all $i \in \sset{1, \dots, d}$, it follows that $\max(p_i(Q)) \geq \max(p_i(R))$ for all $i \in \sset{1, \dots, d}$. If $\min(p_i(Q)) \leq \min(p_i(R))$, then $p_i(R) \subseteq p_i(Q)$; otherwise, $p_i(Q)$ and $p_i(R)$ are incomparable. If $p_i(Q)$ and $p_i(R)$ are incomparable for all $i \in \sset{1, \dots, d}$, then $y = (y_1, \dots, y_d)$ with $y_i = \min(p_i(Q))$ is a corner of $Q$ and since $\min(p_i(Q)) > \min(p_i(R))$, it follows that $y \in R$, a contradiction. It follows that there exists an $i \in \sset{1, \dots, d}$ such that $p_i(R) \subsetneq p_i(Q)$. 

If $p_i(R) \subsetneq p_i(Q)$ for all $i \in \sset{1, \dots, d}$, then $R \subseteq Q$; this implies the result.
\end{proof}

\begin{observation} \label{lem:contains2} Let $\F$ be a family of axis-parallel boxes in $\R^d$. Let $\F'$ arise from $\F$ by removing every box in $\F$ that contains another box in $\F$. Then $\nu(\F) = \nu(\F')$ and $\tau(\F) = \tau(\F')$. 
\end{observation}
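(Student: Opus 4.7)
The plan is to prove each equality by checking both inequalities separately, with the nontrivial direction in each case resting on a single minimality observation. The inclusion $\F' \subseteq \F$ immediately yields $\nu(\F') \le \nu(\F)$ and $\tau(\F') \le \tau(\F)$, since every matching in $\F'$ is a matching in $\F$ and every cover of $\F$ hits in particular every box of $\F'$. So I focus on the reverse directions.

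The key observation is that for every $F \in \F$ there exists a box $F^* \in \F'$ with $F^* \subseteq F$. This follows by finite descent: start with $F$; if $F \in \F'$ we are done, otherwise by the definition of $\F'$ there is some box $F_1 \in \F$ with $F_1 \subsetneq F$, and we iterate the argument with $F_1$ in place of $F$. Since $\F$ is finite, the chain must terminate at some $F^* \in \F'$ contained in $F$.

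Given this, the inequality $\nu(\F) \le \nu(\F')$ follows by taking a maximum matching $\M \subseteq \F$ and replacing every $F \in \M$ by a chosen $F^* \in \F'$ with $F^* \subseteq F$. Shrinking each member of a pairwise disjoint family preserves pairwise disjointness, and if two distinct $F_1, F_2 \in \M$ were assigned the same replacement $F^*$, then $F^* \subseteq F_1 \cap F_2 = \emptyset$, a contradiction; hence the replacements form a matching of the same size in $\F'$. Symmetrically, $\tau(\F) \le \tau(\F')$ is seen by taking any cover $C$ of $\F'$ and any $F \in \F$, selecting $F^* \in \F'$ with $F^* \subseteq F$, and observing that any point of $C$ hitting $F^*$ also hits $F$. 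I do not expect a real obstacle here; the statement is essentially bookkeeping once the minimality observation is in place.
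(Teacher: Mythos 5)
Your proof is correct and follows essentially the same route as the paper's: both reduce to the observation that every box of $\F$ contains a member of $\F'$, then replace matched boxes by contained boxes and note that a cover of $\F'$ covers $\F$. Your explicit finite-descent justification of that observation (and the remark that distinct matched boxes get distinct replacements) only makes explicit what the paper leaves implicit.
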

\begin{proof}
Since $\F' \subseteq \F$, it follows that $\nu(\F') \leq \nu(\F)$ and $\tau(\F') \leq \tau(\F)$. Let $\M$ be a matching in $\F$ of size $\nu(\F)$. Let $\M'$ arise from $\M$ by replacing each box $R$ in $\M \setminus \F'$ with a box in $\F'$ contained in $R$. Then $\M'$ is a matching in $\F'$, and so $\nu(\F') = \nu(\F)$. Moreover, let $P$  be a cover of $\F'$. Since every box in $\F$ contains a box in $F'$ (possibly itself) which, in turn, contains a point in $P$, we deduce that $P$ is a cover of $\F$. It follows that $\tau(\F') = \tau(\F)$. 
\end{proof}

A family $\F$ of axis-parallel boxes is \emph{clean} if no box in $\F$ contains another box in $\F$. By Observation~\ref{lem:contains2}, we may restrict ourselves to clean families of boxes. 

\section{Proof of Theorem \ref{mainthm1}}

Throughout this section, let $\F$ be a clean family of axis-parallel boxes in $\R^d$, and let $\M$ be a matching of maximum size in $\F$. We let $\F(\M)$ denote the subfamily of $\F$ consisting of those boxes $R$ in $\F$ for which for every $M \in \M$, either $M$ is disjoint from $R$ or $M$ contains at least $2^{d-1}$ corners of $R$. Our goal is to bound $\tau(\F(\M))$.

\begin{lemma} \label{lem:intersect} Let $R \in \F(\M)$. Then $R$ intersects at least one and at most two boxes in $\M$. If $R$ intersects two boxes $M_1, M_2 \in \M$, then there exists $j \in \sset{1,\dots, d}$ such that $M_1 \prec_j M_2$ or $M_2 \prec_j M_1$, and for all $i \in \sset{1, \dots, d} \setminus \sset{j}$, we have that $p_i(R) \subseteq p_i(M_1)$ and $p_i(R) \subseteq p_i(M_2)$.
\end{lemma}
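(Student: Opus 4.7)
The plan is to argue the three claims of the lemma in order: first that $R$ meets at least one box of $\M$, then that it meets at most two, and finally the structural statement. The only ingredient I will use beyond elementary manipulations is the corner count built into the definition of $\F(\M)$: every $M\in\M$ that meets $R$ contains at least $2^{d-1}$ of the $2^d$ corners of $R$.

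For the first claim, I would split on whether $R$ lies in $\M$. If $R\in\M$, then $R$ meets itself. If $R\notin\M$, then the maximality of $\M$ rules out $\M\cup\sset{R}$ being a matching, so $R$ must intersect some element of $\M$. For the second claim, suppose for contradiction that $R$ meets three distinct boxes $M_1,M_2,M_3\in\M$. Then these boxes contain altogether at least $3\cdot 2^{d-1}>2^d$ corner-incidences with $R$, while $R$ has only $2^d$ corners. By pigeonhole two of the $M_k$ share a corner of $R$, contradicting the disjointness of $\M$.

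For the structural statement, assume $R$ meets exactly $M_1,M_2\in\M$. The same count shows that $M_1$ and $M_2$ contain disjoint sets of corners of $R$, each of size exactly $2^{d-1}$, whose union is the full corner set of $R$. Since $M_1$ and $M_2$ are disjoint, Observation~\ref{obs:disjoint} supplies an index $j$ with, say, $M_1\prec_j M_2$, i.e.\ $\max p_j(M_1)<\min p_j(M_2)$. The $j$-th coordinate of any corner of $R$ equals either $\min p_j(R)$ or $\max p_j(R)$, each value attained by $2^{d-1}$ corners. Combined with the $j$-separation of $M_1$ and $M_2$ and with the exact corner count in each, this forces $M_1$ to contain precisely the $2^{d-1}$ corners of $R$ with $j$-coordinate $\min p_j(R)$, and $M_2$ to contain the remaining $2^{d-1}$ corners with $j$-coordinate $\max p_j(R)$; in other words, each $M_k$ swallows one $(d-1)$-face of $R$ perpendicular to the $j$-axis.

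To finish, fix any $i\in\sset{1,\dots,d}\setminus\sset{j}$. The $2^{d-1}$ corners of a single such $(d-1)$-face of $R$ realize both $\min p_i(R)$ and $\max p_i(R)$ as their $i$-th coordinates, so both endpoints of $p_i(R)$ lie in $p_i(M_k)$, giving $p_i(R)\subseteq p_i(M_k)$ for $k=1,2$. The only step needing a little care is identifying, in the previous paragraph, which specific $2^{d-1}$ corners land in each $M_k$; once that bookkeeping is in place the projection inclusions are automatic.
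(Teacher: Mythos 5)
Your proof is correct, and it reaches the structural conclusion by a slightly different route than the paper. The paper first analyzes a single intersecting box $M_1$: it invokes Lemma~\ref{lem:contains} to produce a coordinate $j$ in which $p_j(R)$ and $p_j(M_1)$ are incomparable, concludes that $M_1$ contains exactly the $2^{d-1}$ corners of $R$ forming one $j$-face (hence $p_i(R)\subseteq p_i(M_1)$ for $i\ne j$), then uses the corner count to cap the number of intersecting boxes at two, and only at the end deduces $M_1\prec_j M_2$ or $M_2\prec_j M_1$ from Observation~\ref{obs:disjoint} together with the fact that the other projections of $M_1$ and $M_2$ overlap. You reverse this: you get ``at most two'' directly by pigeonhole, obtain the separating coordinate $j$ by applying Observation~\ref{obs:disjoint} to $M_1$ and $M_2$ at the outset, and then use the exact partition of the $2^d$ corners together with the $j$-separation to force each $M_k$ to swallow one $j$-face, from which the projection inclusions follow. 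Your version avoids Lemma~\ref{lem:contains} entirely and is self-contained; the price is that it says nothing in the case where $R$ meets only one box of $\M$, whereas the paper's intermediate claim (every intersecting $M$ captures exactly one face of $R$, with an incomparable coordinate) is what later justifies the dichotomy between edge-witnessing and $i$-pendant boxes. Since the lemma as stated only concerns the two-box case, your argument proves exactly what is claimed.
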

\begin{proof}
If $R$ is disjoint from every box in $\M$, then $\M \cup \sset{R}$ is a larger matching, a contradiction. So $R$ intersects at least one box in $\M$. Let $M_1$ be in $\M$ such that $R \cap M_1 \neq \emptyset$. We claim that there exists $j \in \sset{1, \dots, d}$ such that $M_1$ contains precisely the set of corners of $R$ with the same $j$th coordinate. 

By Lemma~\ref{lem:contains}, there exists $j \in \sset{1,\dots,d}$ such that $p_j(R) = [a,b]$ and $p_j(M_1)$ are incomparable. By symmetry, we may assume that $a \in p_j(M_1)$, $b \not\in p_j(M_1)$. This proves that $M_1$ contains all $2^{d-1}$ corners of $R$ with $a$ as their $j$th coordinate, and our claim follows.

Consequently, $p_i(R) \subseteq p_i(M_1)$ for all $i \in \sset{1, \dots, d} \setminus \sset{j}$. Since $R$ has exactly $2^d$ corners, and members of $\M$ are disjoint, it follows that there exist at most two boxes in $\M$ that intersect $R$. If $M_1$ is the only one such box, then the result follows. Let $M_2 \in \M \setminus \sset{M_1}$ such that $R \cap M_1 \neq \emptyset$. By our claim, it follows that $M_2$ contains $2^{d-1}$ corners of $R$; and since $M_1$ is disjoint from $M_2$, it follows that $M_2$ contains precisely those corners of $R$ with $j$th coordinate equal to $b$. Therefore, $p_i(R) \subseteq p_i(M_2)$ for all $i \in \sset{1, \dots, d} \setminus \sset{j}$.  We conclude that $p_i(M_2)$ is not disjoint from $p_i(M_1)$ for all $i \in \sset{1, \dots, d} \setminus \sset{j}$, and  since $M_1, M_2$ are disjoint, it follows from Observation~\ref{obs:disjoint} that either $M_1 \prec_j M_2$ or $M_2 \prec_j M_1$. 
\end{proof}

For $i \in \sset{1, \dots, d}$, we define a directed graph $G_i$ as follows. We let $V(G_i) = \M$, and for $M_1, M_2 \in \M$ we let $M_1M_2 \in E(G_i)$ if and only if $M_1 \prec_i M_2$ and there exists $R \in \F(\M)$ such that $R \cap M_1 \neq \emptyset$ and $R \cap M_2 \neq \emptyset$. In this case, we say that $R$ \emph{witnesses} the edge $M_1M_2$. For $i = \sset{1, \dots, d}$, we say that $R$ is \emph{$i$-pendant} at $M_1 \in \M$ if $M_1$ is the only box of $\M$ intersecting $R$ and $p_i(R)$ and $p_i(M_1)$ are incomparable. Note that by Lemma~\ref{lem:intersect}, every box $R$ in $\F(\M)$ satisfies exactly one of the following:  
$R$ witnesses an edge in exactly one of the graphs $G_i$, $i \in \sset{1,\dots,d}$; or
 $R$ is $i$-pendant for exactly one $i \in \sset{1,\dots, d}$.

\begin{lemma} \label{lem:edges} Let $i \in \sset{1,\dots,d}$. Let $Q, R \in \F(\M)$ be such that $Q$ witnesses an edge $M_1M_2$ in $G_i$, and $R$ witnesses an edge $M_3M_4$ in $G_i$. If $Q$ and $R$ intersect, then either $M_1 = M_4$, or $M_2 = M_3$, or $M_1M_2 = M_3M_4$. 
\end{lemma}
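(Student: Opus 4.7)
The plan is to assume for contradiction that $Q \cap R \neq \emptyset$ while none of the three stated alternatives holds: $M_1 \neq M_4$, $M_2 \neq M_3$, and $M_1M_2 \neq M_3M_4$ (the last meaning $M_1 \neq M_3$ or $M_2 \neq M_4$). The four boxes then fall into one of three cases: (i) all four are distinct, (ii) $M_1 = M_3$ but $M_2 \neq M_4$, or (iii) $M_2 = M_4$ but $M_1 \neq M_3$. The strategy is to rule out each case using a slight refinement of Lemma~\ref{lem:intersect}.

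First I would extract the following from Lemma~\ref{lem:intersect} applied to $Q$: the coordinate $j$ supplied there must equal $i$, for otherwise $p_i(Q) \subseteq p_i(M_1) \cap p_i(M_2) = \emptyset$. Tracking its proof further shows that the $2^{d-1}$ corners of $Q$ lying in $M_1$ are exactly those with $i$-coordinate $\min p_i(Q)$, while the other $2^{d-1}$ corners, lying in $M_2$, have $i$-coordinate $\max p_i(Q)$. Writing $p_i(Q) = [a, b]$ this gives $a \in p_i(M_1)$, $b \in p_i(M_2)$, and $p_k(Q) \subseteq p_k(M_1) \cap p_k(M_2)$ for every $k \neq i$; the analogous conclusions $p_i(R) = [c, d]$ with $c \in p_i(M_3)$, $d \in p_i(M_4)$, and $p_k(R) \subseteq p_k(M_3) \cap p_k(M_4)$ for $k \neq i$ hold for $R$. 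Since $p_k(Q) \cap p_k(R) \neq \emptyset$ for all $k$, for every $k \neq i$ and every $(s, t) \in \sset{1, 2} \times \sset{3, 4}$ with $M_s \neq M_t$ the intervals $p_k(M_s)$ and $p_k(M_t)$ meet, so Observation~\ref{obs:disjoint} forces $M_s$ and $M_t$ to be separated in coordinate $i$.

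For case (ii) the distinct disjoint boxes $M_2, M_4$ are therefore comparable in coordinate $i$. If $M_2 \prec_i M_4$ then $p_i(R) = [c, d]$ with $c \in p_i(M_1)$ and $d \in p_i(M_4)$ contains $p_i(M_2)$, and $p_k(R) \cap p_k(M_2) \supseteq p_k(R) \cap p_k(Q) \neq \emptyset$ for $k \neq i$ (using $p_k(Q) \subseteq p_k(M_2)$), hence $R \cap M_2 \neq \emptyset$, contradicting Lemma~\ref{lem:intersect} applied to $R$. If $M_4 \prec_i M_2$, the same argument with $Q$ and $R$ interchanged yields $Q \cap M_4 \neq \emptyset$. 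Case (iii) is symmetric.

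For case (i), the previous paragraph makes $\sset{M_1, M_2, M_3, M_4}$ linearly ordered in coordinate $i$, compatibly with $M_1 \prec_i M_2$ and $M_3 \prec_i M_4$, giving six orderings. In any ordering where some $M_s \in \sset{M_3, M_4}$ lies strictly between $M_1$ and $M_2$, the interval $[a, b]$ contains $p_i(M_s)$ and the same trapped-box argument gives $Q \cap M_s \neq \emptyset$, contradicting Lemma~\ref{lem:intersect} for $Q$; the symmetric situation with some $M_s \in \sset{M_1, M_2}$ between $M_3$ and $M_4$ is handled analogously. This rules out four of the six orderings. The remaining two are $M_1 \prec_i M_2 \prec_i M_3 \prec_i M_4$ and its reverse, in which the endpoint refinement places $[a, b]$ and $[c, d]$ on opposite sides of the gap between $p_i(M_2)$ and $p_i(M_3)$ (respectively between $p_i(M_4)$ and $p_i(M_1)$), giving $p_i(Q) \cap p_i(R) = \emptyset$ and contradicting $Q \cap R \neq \emptyset$. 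The main obstacle is the refinement of Lemma~\ref{lem:intersect} identifying which endpoint of $p_i(Q)$ lies in $p_i(M_1)$ versus $p_i(M_2)$; without it the two well-separated orderings cannot be closed, and once it is established the remainder of the argument is a uniform "some $M_s$ gets trapped" bookkeeping.
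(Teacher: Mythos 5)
Your proof is correct and follows essentially the same route as the paper's: both extract from Lemma~\ref{lem:intersect} that the witnesses' projections in every coordinate other than $i$ are contained in those of both endpoints, so that all four matching boxes are pairwise separated only in coordinate $i$, and then derive a contradiction by trapping a third member of $\M$ inside $p_i(Q)$ or $p_i(R)$ (or by separating $p_i(Q)$ from $p_i(R)$). The paper organizes this more compactly, using a reflection symmetry to reduce to the case $M_1 \notin \sset{M_3, M_4}$ and then arguing via a point of $Q \cap R$, whereas you exhaustively enumerate the admissible linear orderings; the underlying mechanism is identical.
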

\begin{proof} By symmetry, we may assume that $i=1$. Let $p_1(M_1) = [x_1,y_1]$ and $p_1(M_2) = [x_2,y_2]$. It follows that $p_1(Q) \subseteq [x_1,y_2]$. Let $a = (a_1, a_2, \dots, a_d) \in Q \cap R$. It follows that $a_j \in p_j(Q) \subseteq p_j(M_1) \cap p_j(M_2)$ and $a_j \in p_j(R) \subseteq p_j(M_3) \cap p_j(M_4)$ for all $j \in \sset{2, \dots, d}$. 

If $M_1 \in \sset{M_3, M_4}$ and $M_2 \in \sset{M_3, M_4}$, then $M_1M_2 = M_3M_4$, and the result follows. Therefore, we may assume that this does not happen. If $M_1 \in \sset{M_3, M_4}$, we reflect every rectangle in $\F$ along the origin. When constructing $G_1$ for this family, we have $M_4M_3, M_2M_1 \in E(G_1)$, and $M_2 \not\in \sset{M_3, M_4}$. Thus, by symmetry, we may assume that $M_1$ is distinct from $M_3$ and $M_4$. 

It follows that $a \not\in M_1$, for otherwise $R$ intersects three distinct members of $\M$, contrary to Lemma~\ref{lem:intersect}. Since $R$ is disjoint from $M_1$, it follows that either $M_1 \prec_1 R$ or $R \prec_1 M_1$. But $p_1(Q) \subseteq [x_1,y_2]$, and since $Q \cap R \neq \emptyset$, it follows that $M_1 \prec_1 R$.

Since $M_3 \neq M_1$ and $p_j(M_3) \cap p_j(M_1) \ni a_j$ for all $j \in \sset{2, \dots, d}$, it follows that either $M_1 \prec_1 M_3$ or $M_1 \prec_1 M_3$. Since $M_1 \prec_1 R$ and $R \cap M_3 \neq \emptyset$, it follows that $M_1 \prec_1 M_3$. 

Suppose that $a \in M_3$. Then $Q \cap M_3 \neq \emptyset$, and since $M_1 \prec_1 M_3$, we have that $M_3 = M_2$ as desired. 

Therefore, we may assume that $a \not\in M_3$, and thus $p_1(M_1) \prec p_1(M_3) \prec [a_1,a_1]$. Since $[y_1, a_1] \subseteq p_1(Q)$, it follows that $p_1(M_3) \cap p_1(Q) \neq \emptyset$. But $p_j(M_3) \cap p_j(Q) \ni a_j$ for all $j \in \sset{2, \dots, d}$, and hence $Q \cap M_3 \neq \emptyset$. But then $M_3 \in \sset{M_1, M_2}$, and thus $M_3 = M_2$. This concludes the proof. 
\end{proof}

The following is a well-known fact about directed graphs; we include a proof for completeness. 
\begin{lemma} \label{lem:digraph} Let $G$ be a directed graph. Then there exists an edge set $E \subseteq E(G)$ with $|E| \geq |E(G)|/4$ such that for every vertex $v \in V(G)$, either $E$ contains no incoming edge at $v$, or $E$ contains no outgoing edge at $v$. 
\end{lemma}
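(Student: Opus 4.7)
The plan is to prove this by a one-line application of the probabilistic method. I would choose a uniformly random function $\sigma:V(G)\to\{S,T\}$, placing each vertex independently in $S$ (think of this as ``source side'') or in $T$ (``sink side'') with probability $\tfrac{1}{2}$ each. Then I would define $E\subseteq E(G)$ to be the set of all directed edges $uv\in E(G)$ with $\sigma(u)=S$ and $\sigma(v)=T$.

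The structural requirement of the lemma is automatic for any such $\sigma$: if $\sigma(v)=S$, every edge of $E$ incident with $v$ must have $v$ as its tail, so $E$ contains no incoming edge at $v$; if $\sigma(v)=T$, every edge of $E$ incident with $v$ must have $v$ as its head, so $E$ contains no outgoing edge at $v$. Thus the promised dichotomy holds at every vertex, regardless of the choice of $\sigma$.

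It only remains to arrange $|E|\ge |E(G)|/4$. For each directed edge $uv\in E(G)$, the events $\sigma(u)=S$ and $\sigma(v)=T$ are independent and each has probability $\tfrac{1}{2}$, so $uv\in E$ with probability $\tfrac{1}{4}$. Linearity of expectation gives $\mathbb{E}[|E|]=|E(G)|/4$, so some outcome of $\sigma$ realizes $|E|\ge|E(G)|/4$; fixing that outcome yields the desired $E$. There is no real obstacle in this argument, and if a fully deterministic proof were preferred one could derandomize by the standard method of conditional expectations, assigning vertices to $S$ or $T$ one at a time so as not to decrease the conditional expected number of $S$-to-$T$ edges.
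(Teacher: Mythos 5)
Your proof is correct, and it takes a genuinely different route from the paper's. You use the probabilistic method: a uniform random two-coloring $\sigma:V(G)\to\{S,T\}$ puts each edge in the $S$-to-$T$ class with probability $1/4$, and linearity of expectation finishes the job; the structural dichotomy at each vertex is automatic from the definition of $E$. The paper instead gives an explicit greedy construction: it processes the vertices $v_1,\dots,v_n$ one at a time, maintaining a partition $(X_i,Y_i)$ of $\{v_1,\dots,v_i\}$ so that the number of crossing edges (in both directions combined) is always at least half the edges of the induced subgraph, and at the end keeps whichever of the two directions $E(X_n,Y_n)$, $E(Y_n,X_n)$ is larger, again landing at $|E(G)|/4$. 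The paper's argument is precisely the kind of deterministic, conditional-expectation-flavored procedure you allude to at the end of your proposal (though it derandomizes the ``at least half the edges cross'' bound and only splits into the two directions at the very last step, rather than tracking the $S$-to-$T$ count throughout). What your approach buys is brevity and transparency; what the paper's buys is an explicit polynomial-time construction with no appeal to randomness, which is immaterial for the covering bounds this lemma feeds into. Either proof is perfectly adequate here.
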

\begin{proof}
For $A, B \subseteq V(G)$, let $E(A, B)$ denote the set of edges of $G$ with head in $A$ and tail in $B$. 

Let $X_0 = Y_0 = \emptyset$, $V(G) = \sset{v_1, \dots, v_n}$. For $i=1, \dots, n$ we will construct $X_i, Y_i$ such that $X_i \cup Y_i = \sset{v_1, \dots, v_i}$, $X_i \cap Y_i = \emptyset$ and $|E(X_i, Y_i)| + |E(Y_i, X_i)| \geq |E(G|(X_i \cup Y_i))|/2$, where $G|(X_i \cup Y_i)$ denotes the induced subgraph of $G$ on vertex set $X_i \cup Y_i$. This holds for $X_0, Y_0$. Suppose that we have constructed $X_{i-1}, Y_{i-1}$ for some $i \in \sset{1, \dots, n}$. If $|E(X_{i-1}, \sset{v_i})| + |E(\sset{v_i}, X_{i-1})| \geq |E(Y_{i-1}, \sset{v_i})| + |E(\sset{v_i}, Y_{i-1})|$, we let $X_i = X_{i-1}, Y_i = Y_{i-1} \cup \sset{v_i}$; otherwise, let $X_i = X_{i-1} \cup \sset{v_i}, Y_i = Y_{i-1}$. It follows that $X_i, Y_i$ still have the desired properties. Thus, $|E(X_n, Y_n)| + |E(Y_n, X_n)| \geq |E(G)|/2$. By symmetry, we may assume that $|E(X_n, Y_n)| \geq |E(G)|/4$. But then $E(X_n, Y_n)$ is the desired set $E$; it contains only incoming edges at vertices in $X_n$, and only outgoing edges at vertices in $Y_n$. This concludes the proof. 
\end{proof}

\begin{theorem}\label{thm:edges} For $i \in \sset{1,\dots, d}$, $|E(G_i)| \leq 4 \nu(\F)$. 
\end{theorem}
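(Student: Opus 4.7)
The plan is to combine Lemma~\ref{lem:edges} with Lemma~\ref{lem:digraph} to extract a large set of edges whose witnesses form a matching in $\F$. First, I would apply Lemma~\ref{lem:digraph} to $G_i$ to obtain an edge set $E \subseteq E(G_i)$ with $|E| \geq |E(G_i)|/4$, such that every vertex of $G_i$ has either no outgoing edge or no incoming edge in $E$. For every $e \in E$, I would then fix a single witness $R_e \in \F(\M)$.

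The key claim is that $\{R_e : e \in E\}$ is a matching in $\F$. Suppose, for contradiction, that $R_e$ and $R_{e'}$ intersect for distinct edges $e = M_1M_2$ and $e' = M_3M_4$ of $E$. By Lemma~\ref{lem:edges}, one of the following holds: $M_1 = M_4$, $M_2 = M_3$, or $M_1M_2 = M_3M_4$. The third case is excluded since $e \neq e'$. In the first case, $M_1$ is the tail of $e \in E$ and the head of $e' \in E$, which contradicts the defining property of $E$. The second case is symmetric. Hence the witnesses are pairwise disjoint, so $\{R_e : e \in E\}$ is a matching in $\F$, giving $|E| \leq \nu(\F)$, and therefore $|E(G_i)| \leq 4\nu(\F)$.

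The two lemmas do all the heavy lifting: the delicate geometric work is already packaged into Lemma~\ref{lem:edges} (which tells us that overlapping witnesses in $G_i$ must share an endpoint or coincide), and Lemma~\ref{lem:digraph} (standard graph-theoretic bisection) lets us discard the coincidence of head-meets-tail patterns that cause trouble. I expect no real obstacle beyond carefully unpacking the case analysis on which endpoint equality holds and verifying that each case contradicts either the distinctness of $e, e'$ or the source/sink structure of $E$.
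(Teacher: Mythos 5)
Your proposal is correct and follows exactly the paper's argument: apply Lemma~\ref{lem:digraph} to extract $E$ with $|E| \geq |E(G_i)|/4$, select one witness per edge, and use Lemma~\ref{lem:edges} together with the source/sink property of $E$ to show the witnesses are pairwise disjoint, giving $|E| \leq \nu(\F)$. No gaps; this matches the paper's proof of Theorem~\ref{thm:edges} step for step.
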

\begin{proof}
Let $E \subseteq E(G_i)$ as in Lemma~\ref{lem:digraph}. For each edge in $E$, we pick one box witnessing this edge; let $\F'$ denote the family of these boxes. We claim that $\F'$ is a matching. Indeed, suppose not, and let $Q, R \in \F'$ be distinct and intersecting. Let $Q$ witness $M_1M_2$ and $R$ witness $M_3M_4$. By Lemma~\ref{lem:edges}, it follows that either $M_1M_2 = M_3M_4$ (impossible since we picked exactly one witness per edge) or $M_1 = M_4$ (impossible because $E$ does not contain both an incoming and an outgoing edge at $M_1 = M_4$) or  $M_2 = M_3$ (impossible because $E$ does not contain both an incoming and an outgoing edge at $M_2 = M_3$). This is a contradiction, and our claim follows. Now we have $\nu(\F) \geq |\F'| = |E| \geq |E(G)|/4$, which implies the result. 
\end{proof}

A matching $\M$ of a clean family $\F$ of boxes is \emph{extremal} if for every $M \in \M$ and $R \in \F \setminus \M$, either $(\M \setminus \sset{M}) \cup \sset{R}$ is not a matching or there exists an $i \in \sset{1, \dots, d}$ such that $\max(p_i(R)) \geq \max (p_i(M))$. Every family $\F$ of axis parallel boxes has an extremal maximum matching. For example, the maximum matching $\M$ minimizing $\sum_{M \in \M} \sum_{i=1}^d \max(p_i(M))$ is extremal. 

\begin{theorem} \label{thm:cover} For $i \in \sset{1,\dots, d}$, let $\F_i$ denote the set of boxes in $\F(\M)$ that either are $i$-pendant or witness an edge in $G_i$. Then $\tau(\F_i) \leq (4+d)\nu(\F)$. If $\M$ is extremal, then  $\tau(\F_i) \leq (3+d)\nu(\F)$.
\end{theorem}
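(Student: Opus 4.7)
The plan is to decompose $\F_i$ by how each of its boxes meets $\M$ and then cover each piece separately, using Theorem~\ref{thm:edges} to bound the edge contribution. By symmetry I may take $i=1$ and set $k=\nu(\F)$. By maximality of $\M$, every $R\in\F_1$ meets some $M\in\M$, and Lemma~\ref{lem:intersect} tells us that when $R\cap M\neq\emptyset$ we have $p_j(R)\subseteq p_j(M)$ for every $j\neq 1$, while in coordinate $1$ the box $R$ sticks out of $M$ either to the left (with $\min p_1(R)<\min p_1(M)$) or to the right (with $\max p_1(R)>\max p_1(M)$).

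I would first dispose of the $1$-pendants. For each $M\in\M$, the family of right-sticking $1$-pendants at $M$ is pairwise intersecting: if two of them $R,R'$ were disjoint then $\{R,R'\}\cup(\M\setminus\{M\})$ would be a matching of size $k+1$, contradicting the maximality of $\M$. By Observation~\ref{obs:pairwise}, a single point covers all right-sticking $1$-pendants at $M$, and symmetrically for left-sticking ones, contributing at most $2k$ points in total.

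The core step is then to cover the witnesses. Fix $M\in\M$, and let $\F_1^+(M)$ be the set of boxes in $\F_1$ that meet $M$ and stick out of it to the right; this consists of the right-sticking $1$-pendants at $M$ together with the witnesses of edges of $G_1$ with tail $M$. Every $R\in\F_1^+(M)$ contains the slice $\{\max p_1(M)\}\times\pi(R)$, where $\pi(R)=p_2(R)\times\cdots\times p_d(R)\subseteq\pi(M)$, so covering $\F_1^+(M)$ by points of the form $(\max p_1(M),y_2,\dots,y_d)$ reduces to covering the $(d-1)$-dimensional projected family $\pi(\F_1^+(M))$. A matching argument parallel to the pendant one shows that this projected family has matching number at most $1+\mathrm{outdeg}_{G_1}(M)$; pooling the pendants into a single point and then applying Observation~\ref{obs:nutwo} in dimension $d-1$ to the witnesses of each outgoing edge (which, by yet another maximality argument, themselves have matching number at most $2$), I would argue that $\F_1^+(M)$ admits a cover of size at most $d+c^+(M)$, where the correction term satisfies $\sum_{M\in\M}c^+(M)\le|E(G_1)|$. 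Handling $\F_1^-(M)$ symmetrically and invoking Theorem~\ref{thm:edges} yields $\tau(\F_1)\le(d+4)\nu(\F)$.

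The main obstacle is this edge/witness step: a naive per-edge application of Observation~\ref{obs:nutwo} in $\R^{d-1}$ would use $d$ points per edge and overshoot the budget by a factor of roughly $d$, so the per-edge covers at endpoints of different edges have to be pooled carefully to keep the total contribution bounded by $d\cdot k+|E(G_1)|$. For the extremal refinement, I would use that if $\M$ is extremal then every left-sticking $1$-pendant $R$ at $M$ must satisfy $\max p_j(R)=\max p_j(M)$ for some $j\neq 1$ (otherwise $(\M\setminus\{M\})\cup\{R\}$ would contradict extremality). This extra incidence allows me to merge one vertex point across left- and right-sticking boxes at $M$, saving one point per $M$ and improving the bound to the claimed $(d+3)\nu(\F)$.
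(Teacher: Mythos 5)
Your overall strategy is the paper's: charge each box of $\F_1$ to a box of $\M$, cover the pendants via Observation~\ref{obs:pairwise}, slice the edge-witnesses with the hyperplane $x_1=\max p_1(M)$ and apply Observation~\ref{obs:nutwo} one dimension down, then sum using Theorem~\ref{thm:edges}. But the step you yourself flag as ``the main obstacle'' is a genuine gap, and it is precisely where the content of the theorem lies. Knowing that the witnesses $\mathcal{B}(M')$ of each edge $MM'$ have matching number at most $2$ is not enough: applying Observation~\ref{obs:nutwo} to every outgoing edge costs $d$ points per edge, i.e.\ up to $d\,|E(G_1)|\le 4d\,\nu(\F)$ in total, and no pooling scheme is actually described that reduces this to $d$ per vertex plus $1$ per edge. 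The missing ingredient is a \emph{per-vertex} statement: for each fixed $M$, at most one out-neighbour $M'$ can have $\nu(\mathcal{B}(M'))=2$. This follows from one more augmenting swap: if distinct out-neighbours $M',M''$ each had two disjoint witnesses, then by Lemma~\ref{lem:edges} these four boxes are pairwise disjoint, and each meets only $M,M',M''$ among $\M$, so replacing $\{M,M',M''\}$ by them enlarges $\M$. With this, exactly one edge out of $M$ may cost $d$ points, while every other edge costs $1$ point (its witnesses pairwise intersect, so Observation~\ref{obs:pairwise} applies), giving $\tau(\F_M)\le d^+(M)+d$ and hence $(4+d)\nu(\F)$ after summing. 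Your proposed fix --- pooling covers ``at endpoints of different edges'' --- points in the wrong direction; the saving is per tail vertex, not per edge pair, and without it the accounting does not close.

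The extremal refinement is also not argued correctly. From extremality applied to a left-sticking $1$-pendant $R$ at $M$ (for which $(\M\setminus\{M\})\cup\{R\}$ \emph{is} a matching) you get some $j$ with $\max p_j(R)\ge\max p_j(M)$; since $p_j(R)\subseteq p_j(M)$ for $j\ne 1$, the useful case is $j=1$, which forces $R$ to contain a point with $x_1=\max p_1(M)$ --- i.e.\ genuinely left-sticking pendants do not occur. That is what lets one merge $\mathcal{A}$ with the single witness class of matching number $2$ into a family with $\nu\le 2$, covered by $d$ points, saving one point per $M$. The degenerate coincidence $\max p_j(R)=\max p_j(M)$ for $j\ne1$ that you extract gives no mechanism for ``merging one vertex point across left- and right-sticking boxes,'' since such an $R$ need not contain any point of the hyperplane where your right-sticking covers live. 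A minor further inefficiency: any two $1$-pendants at the same $M$, regardless of which side they stick out on, must intersect (otherwise swap both in for $M$), so one point per $M$ suffices for all pendants; your $2\nu(\F)$ bookkeeping for pendants would by itself push the final constant above $4+d$.
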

\begin{proof}
By symmetry, it is enough to prove the theorem for $i=1$. For $M \in \M$, let $\F_M$ denote the set of boxes in $\F_1$ that either are 1-pendant at $M$, or witness an edge $MM'$ of $G_1$. It follows that $\bigcup_{M \in \M} \F_M = \F_1$. For $M \in \M$, let $d^+(M)$ denote the out-degree of $M$ in $G_1$. We will prove that $\tau(\F_M) \leq d^+(M) + d$ for all $M \in \M$. 

Let $M \in \M$, and let $\mathcal{A}$ denote the set of boxes that are $1$-pendant at $M$. Suppose that $\mathcal{A}$ contains two disjoint boxes $M_1, M_2$. Then $(\M \setminus \sset{M}) \cup \sset{M_1, M_2}$ is a larger matching than $\M$, a contradiction. So every two boxes in $\mathcal{A}$ pairwise intersect. By Observation~\ref{obs:pairwise}, it follows that $\tau(A) = 1$. 

Let $\mathcal{B} = \F_M \setminus \mathcal{A}$. Suppose that there is an edge $MM' \in E(G_1)$ such that the set $\mathcal{B}(M')$ of boxes in $\mathcal{B}$ that witness the edge $MM'$ satisfies $\nu(\mathcal{B}(M')) \geq 3$. Then $\M$ is not a maximum matching, since removing $M$ and $M'$ from $\M$ and adding $\nu(\mathcal{B}(M'))$ disjoint rectangles in $\mathcal{B}(M')$ yields a larger matching.  Moreover, for distinct $M', M'' \in \M$, every box in $\mathcal{B}(M')$ is disjoint from every box in $\mathcal{B}(M'')$ by Lemma~\ref{lem:edges}. Thus, if there exist $M', M''$ such that $\nu(\mathcal{B}(M')) = \nu(\mathcal{B}(M'')) = 2$ and $M' \neq M''$, then removing $M, M'$ and $M''$ and adding two disjoint rectangles from each of $\mathcal{B}(M')$ and $\mathcal{B}(M'')$ yields a bigger matching, a contradiction. 

Let $p_1(M) = [a,b]$. Two boxes in $\mathcal{B}(M')$ intersect if and only if their intersections with the hyperplane $H = \sset{(x_1, \dots, x_d) : x_1 = b}$ intersect. If $\nu(\mathcal{B}(M')) = 1$, then $\tau(\mathcal{B}(M')) = 1$ by Observation~\ref{obs:pairwise}. If $\nu(\mathcal{B}(M')) = 2$, then $\nu(\sset{F \cap H : F \in \mathcal{B}(M')}) = 2$ and so $$\tau(\mathcal{B}(M')) = \tau(\sset{F \cap H : F \in \mathcal{B}(M')}) \leq d$$ by Observation~\ref{obs:nutwo}. 

Therefore, $$\tau(\mathcal{B}) \leq \sum_{M': MM' \in E(G_1)} \tau(\mathcal{B}(M')) \leq d^+(M) - 1 + d,$$ and since $\tau(\mathcal{A}) \leq 1$, it follows that $\tau(\F_M) \leq d^+(M) + d$ as claimed.  

Summing over all rectangles in $\M$, we obtain
\begin{align*}
\tau(\F_i) &\leq \sum_{M \in \M} \tau(\F_M) \leq \sum_{M \in \M} (d^+(M) + d) \\
&= d|V(G_1)| + |E(G_1)|\leq d|\M| + 4|\M| = (4+d)\nu(\F),
\end{align*}
which proves the first part of the theorem.

If $\M$ is extremal, then every $1$-pendant box at $M$ also intersects $H$. Let $M'$ be such that $\nu(\mathcal{B}(M'))$ is maximum. It follows that $\nu(\mathcal{A} \cup \mathcal{B}(M')) \leq 2$ and thus $\tau(\mathcal{A} \cup \mathcal{B}(M')) \leq d$, implying  $\tau(\F_M) \leq d^+(M) + d -1$. 
This concludes the proof of the second part of the theorem.
\end{proof}

\begin{theorem} \label{maintechnical}  Let $\F' \subseteq \F$ be the set of boxes $R \in \F$ such that for each $M \in \M$, either $M \cap R = \emptyset$, or $M$ contains $2^{d-1}$ corners of $R$, or $R$ contains a corner of $M$. Then $\tau(\F') \leq  (2^d+(4+d)d)   \nu(\F)$. If $\M$ is extremal, then  $\tau(\F') \leq  (2^d+(3+d)d)   \nu(\F)$.
\end{theorem}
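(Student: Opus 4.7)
The plan is to split $\F'$ into two parts, $\F(\M)$ and $\F'\setminus\F(\M)$, cover the second part trivially with corners of $\M$, and cover the first part using the machinery already established in Theorem~\ref{thm:cover}. The decomposition works because $\F(\M)\subseteq\F'$ by definition, so writing $\F' = \F(\M)\cup(\F'\setminus\F(\M))$ is a valid partition.

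For $\F'\setminus\F(\M)$, note that if $R\in\F'$ fails to lie in $\F(\M)$, then there is some $M\in\M$ for which $M\cap R\neq\emptyset$ but $M$ does not contain $2^{d-1}$ corners of $R$. The trichotomy in the definition of $\F'$ then forces $R$ to contain a corner of that $M$. Consequently, the set $P$ of all corners of all boxes of $\M$ pierces every box of $\F'\setminus\F(\M)$, and $|P|=2^d|\M|=2^d\nu(\F)$, so
\[
\tau(\F'\setminus\F(\M))\le 2^d\nu(\F).
\]

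For $\F(\M)$, I appeal to the observation just before Theorem~\ref{thm:edges}: by Lemma~\ref{lem:intersect}, every $R\in\F(\M)$ either is $i$-pendant for exactly one $i\in\sset{1,\dots,d}$ or witnesses an edge in exactly one $G_i$. Hence $\F(\M)=\bigcup_{i=1}^d\F_i$ with the $\F_i$ as defined in Theorem~\ref{thm:cover}. Summing that theorem over $i$ gives
\[
\tau(\F(\M))\le\sum_{i=1}^d\tau(\F_i)\le d(4+d)\nu(\F),
\]
and in the extremal case $d(3+d)\nu(\F)$. Adding the two bounds yields $\tau(\F')\le\bigl(2^d+(4+d)d\bigr)\nu(\F)$, and $\bigl(2^d+(3+d)d\bigr)\nu(\F)$ when $\M$ is extremal.

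There is no real obstacle remaining: the entire content sits in Theorem~\ref{thm:cover}, and the only point that requires a little care is the implication ``$R\in\F'\setminus\F(\M)$ $\Rightarrow$ $R$ contains a corner of some $M\in\M$'', which uses the $\F'$ trichotomy applied to the same $M$ that witnesses $R\notin\F(\M)$. Once that is spelled out, the rest is arithmetic.
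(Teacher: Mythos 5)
Your proposal is correct and follows essentially the same route as the paper: decompose $\F'$ into $\F(\M)$ (covered via $\F'=\F_1\cup\dots\cup\F_d$ and Theorem~\ref{thm:cover}) and $\F'\setminus\F(\M)$ (covered by the $2^d|\M|$ corners of the boxes in $\M$), then add the bounds. Your explicit justification that every box in $\F'\setminus\F(\M)$ contains a corner of some $M\in\M$ is a detail the paper states without elaboration, and it is argued correctly.
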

\begin{proof}
We proved in Theorem~\ref{thm:cover} that $\tau(\F_i) \leq (4+d)\nu(\F)$ for $i=1,\dots,d$. Let $\mathcal{F}'' = \F' \setminus \F(\M)$. Then $\F''$ consists of boxes $R$ such that $R$ contains a corner of some box $M \in \M$. Let $P$ be the set of all corners of boxes in $\M$. It follows that $P$ covers $\F''$, and so $\tau(\F'') \leq 2^d\nu(\F)$. Since $\F' = \F'' \cup \F_1 \cup \dots \cup \F_d$, it follows that $\tau(\F') \leq (2^d+(4+d)d)  \nu(\F)$. If $\M$ is extremal, the same argument yields that $\tau(\F') \leq (2^d+(3+d)d)  \nu(\F)$, since $\tau(\F_i) \leq (3+d)\nu(\F)$ for $i = 1\dots, d$ by Theorem~\ref{thm:cover}. 
\end{proof}

We are now ready to prove our main theorems. 
\begin{proof}[Proof of Theorem~\ref{mainthm1}] Let $\F$ be a family of axis-parallel boxes in $\R^d$, and let $\M$ be a maximum matching in $\F$ such that for every $F \in \F$ and $M \in \M$, either $F \cap M = \emptyset$, or $F$ contains a corner of $M$, or $M$ contains $2^{d-1}$ corners of $F$. It follows that $\F = \F'$ in Theorem~\ref{maintechnical}, and therefore, $\tau(\F) \leq (2^d+(4+d)d) \nu(\F)$. 
\end{proof}

\section{Proof of Theorem \ref{mainthm2}}

Let $\M$ be a maximum matching in $\F$, and let $\M$ be extremal. Observe that each rectangle $R \in \F$ satisfies one of the following:
\begin{itemize}
\item $R$ contains a corner of some $M \in \M$; 
\item some $M \in \M$ contains two corners of $R$; or
\item there exists $M \in \M$ such that $M \cap R \neq \emptyset$, and $p_i(R) \supseteq p_i(M)$ for some $i \in \sset{1,2}$.
\end{itemize}
By Theorem~\ref{maintechnical}, $14 \nu(\F)$ points suffice to cover every rectangle satisfying at least one of the first two conditions. Now, due to the $r$-bounded aspect ratio, for each $M \in 
\M$ and for each $i\in \{1,2\}$, at most $r^2$ disjoint rectangles $R\in \F$ can satisfy the third condition for $M$ and $i$. Thus the family of projections of the rectangles satisfying the third condition for $M$ and $i$ onto the $(3-i)$th coordinate have a matching number at most $r^2$. Since all these rectangles intersect the boundary of $M$ twice,  by Theorem~\ref{thm:gallai}, we need at most $r^2$ additional points to cover them. We conclude that $\tau(\F) \leq (14+2r^2) \nu(\F)$. 
\qed

\section*{Acknowledgments}
We are thankful to Paul Seymour for many helpful discussions. 
%%%%%%%%%%%%%%%%% the text file
\bibliography{piercingd}{}

\begin{thebibliography}{10}

\bibitem{ahlkar}
Rudolf Ahlswede and Iskander Karapetyan.
\newblock Intersection graphs of rectangles and segments.
\newblock {\em Proceedings GTIT-C, LNCS}, 4123:1064Ð--1065, 2006.

\bibitem{akopyan}
Arseny Akopyan.
\newblock On point coverings of boxes in $\mathbb{R}^d$.
\newblock {\em arXiv preprint arXiv:0706.3405}, 2007.

\bibitem{aronovezrasharir}
Boris Aronov, Esther Ezra, and Micha Sharir.
\newblock Small-size $\epsilon$-nets for axis-parallel rectangles and boxes.
\newblock {\em SIAM Journal on Computing}, 39(7):3248--3282, 2010.

\bibitem{chanhar}
Timothy~M. Chan and Sariel Har-Peled.
\newblock Approximation algorithms for maximum independent set of pseudo-disks.
\newblock {\em Discrete Comput. Geom.}, 48(2):373--392, 2012.

\bibitem{ezra}
Esther Ezra.
\newblock A note about weak $\varepsilon$-nets for axis-parallel boxes in
  d-space.
\newblock {\em Information Processing Letters}, 110(18):835--840, 2010.

\bibitem{fonderflaass}
Dima Fon-Der-Flaass and Alexandr Kostochka.
\newblock Covering boxes by points.
\newblock {\em Discrete mathematics}, 120(1-3):269--275, 1993.

\bibitem{gyarfaslehel}
Andr{\'a}s Gy{\'a}rf{\'a}s and Jen{\"o} Lehel.
\newblock Covering and coloring problems for relatives of intervals.
\newblock {\em Discrete Mathematics}, 55(2):167--180, 1985.

\bibitem{hajnalsuranyi}
Andr{\'a}s Hajnal and J{\'a}nos Sur{\'a}nyi.
\newblock {\"U}ber die {Aufl{\"o}sung} von {Graphen} in vollst{\"a}ndige
  {Teilgraphen}.
\newblock {\em Ann. Univ. Sci. Budapest, E{\"o}tv{\"o}s Sect. Math},
  1:113--121, 1958.

\bibitem{karolyi}
Gyula K{\'a}rolyi.
\newblock On point covers of parallel rectangles.
\newblock {\em Periodica Mathematica Hungarica}, 23(2):105--107, 1991.

\bibitem{pachtardos}
J{\'a}nos Pach and G{\'a}bor Tardos.
\newblock Tight lower bounds for the size of epsilon-nets.
\newblock {\em Journal of the American Mathematical Society}, 26(3):645--658,
  2013.

\bibitem{pygra}
Evangelia Pyrga and Saurabh Ray.
\newblock New existence proofs for {$\epsilon$}-nets.
\newblock pages 199--207, 2008.

\bibitem{wegner}
G.~Wegner.
\newblock {\"U}ber eine kombinatorisch-geometrische {Frage} von {Hadwiger} und
  {Debrunner}.
\newblock {\em Israel Journal of Mathematics}, 3(4):187--198, 1965.

\end{thebibliography}
\bibliographystyle{plain}
\end{document}